\def\classification#1{\def\@class{#1}}
\renewcommand{\wr}{\mathop{\mathrm{wr}}}
\DeclareFontFamily{OT1}{rsfs}{}
\DeclareFontShape{OT1}{rsfs}{n}{it}{<-> rsfs10}{}
\DeclareMathAlphabet{\mathscr}{OT1}{rsfs}{n}{it}
\newcommand{\Fq}{\mathbb{F}_q}
\newcommand{\bF}{\mathbb{F}}
\newcommand{\SL}{\mathrm{SL}}
\newcommand{\Sp}{\mathrm{Sp}}
\newcommand{\PSL}{\mathrm{PSL}}
\newcommand{\PGammaL}{\mathrm{P\Gamma L}}
\newcommand{\GL}{\mathrm{GL}}
\newcommand{\base}{\mathrm{b}}
\newcommand{\Base}{\mathrm{B}}
\newcommand{\Height}{\mathrm{H}}
\newcommand{\Irred}{\mathrm{I}}
\def\ov{\overline}
\def\a{\alpha}
\newcommand{\RC}{\mathrm{RC}}
\newtheorem{prop}{Proposition}[section]
\newtheorem{thm}{Theorem}
\newtheorem{conj}[prop]{Conjecture}
\newtheorem{example}[prop]{Example}
\newtheorem{cor}[thm]{Corollary}
\newtheorem{lem}[prop]{Lemma}
\newtheorem{defn}[prop]{Definition}
\theoremstyle{definition}
\newtheorem{remark}[prop]{Remark}
\numberwithin{equation}{section}
\def\o{\omega}
\theoremstyle{definition}
\begin{document}
\title{Irredundant bases for finite groups of Lie type}  

\begin{abstract}
 We prove that the maximum length of an irredundant base for a primitive action of a finite simple group of Lie type is bounded above by a function which is a polynomial in the rank of the group. We give examples to show that this type of upper bound is best possible.
\end{abstract}
\author{Nick Gill}
\address{School of Mathematics and Statistics, The Open University, Walton Hall, Milton Keynes, MK7 6AA, UK}
\email{nick.gill@open.ac.uk}

\author{Martin W. Liebeck}
\address{Department of Mathematics, Imperial College London, London, SW7 2AZ, UK}
\email{m.liebeck@imperial.ac.uk }

\maketitle

\section{Introduction}

\subsection{Main results} 
Let $G$ be a group acting on a set $\Omega$. Let $\ell$ be a non-negative integer and let $\Lambda = [\omega_1,\dots,\omega_\ell]$ be a sequence of points $\omega_1,\dots, \omega_\ell$ drawn from $\Omega$; we write $G_{(\Lambda)}$ or $G_{\omega_1, \omega_2, \dots, \omega_\ell}$  for the pointwise stabilizer. If $\ell=0$, so $\Lambda$ is empty, then we set $G_{(\Lambda)}=G$.

The sequence $\Lambda$ is called a \emph{base} if $G_{(\Lambda)}=\{1\}$; the sequence $\Lambda$ is called \emph{irredundant} if
\[
  G_{\omega_1,\dots, \omega_{k-1}} \gneq G_{\omega_1,\dots, \omega_{k-1}, \omega_k}
\]
for all $k=1,\dots, \ell$. The size of the longest possible irredundant base is denoted $\Irred(G,\Omega)$. 

The main result of this paper shows that for any primitive action of a simple group of Lie type, the size of an irredundant base is bounded by a polynomial function of the rank of the group.

\begin{thm}\label{t: i}
If $G$ is a simple group of Lie type of rank $r$ acting primitively on a set $\Omega$, then $\Irred(G,\Omega) \leq Cr^8$, where $C$ is an absolute constant. This holds with $C = 174$.
\end{thm}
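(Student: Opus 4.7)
The plan starts from the observation that an irredundant sequence $\omega_1,\ldots,\omega_\ell$ in $\O$ produces a strictly descending chain of subgroups
\[ G \geq G_{\omega_1} > G_{\omega_1,\omega_2} > \cdots > G_{\omega_1,\ldots,\omega_\ell} = 1,\]
in which each term after the first is an intersection of conjugates of $M:=G_{\omega_1}$. Hence $\Irred(G,\O)$ is bounded above by the longest chain of such intersections. The naive estimate $\Irred(G,\O)\leq \log_2 |M|$ gives only $O(r^2 \log q)$; the whole difficulty of the theorem lies in removing the $\log q$ factor, i.e.\ in proving a bound uniform in $q$.

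I would first dispatch the exceptional groups of Lie type (including Suzuki and Ree groups), which have bounded rank. For these the subgroup structure is very constrained, and a uniform constant bound follows from the known classification of their maximal (and submaximal) subgroups combined with a case-by-case refinement of the chain-length estimate above. The main body of the proof is then the classical case: $G = \mathrm{Cl}(V)$ with $\dim V = n = O(r)$, and I would argue by Aschbacher class of $M$. In each of the geometric classes $\C_1,\ldots,\C_8$, the subgroup $M$ preserves a combinatorial-geometric datum on $V$ (a subspace or flag, a direct-sum or tensor decomposition, a subfield or field-extension structure, etc.) of size polynomial in $n$; the stabilizer of further points then either refines this datum (contributing a polynomial-in-$n$ number of steps) or drops $M$ into a classical group of strictly smaller rank, allowing an induction on $r$. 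For the class $\mathcal S$, $M$ is almost simple with $|M|$ bounded in terms of $n$ and $q$, and a separate uniform argument is needed; here one expects the dominant contribution to come from embedded classical subgroups, handled again by the inductive framework.

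The principal obstacle is the $q$-dependence. Long chains of subgroups in $G$ arise chiefly from $p$-subgroups inside unipotent radicals of parabolics, where chains can have length $\Theta(r\log q)$. The key point is that an irredundant sequence cannot freely exploit such chains: each successive stabilizer $G_{\omega_1,\ldots,\omega_k}$ lies inside the \emph{maximal} subgroup $G_{\omega_k}$, so each step corresponds to a genuine geometric refinement rather than merely cutting out a proper normal subgroup of a $p$-group. Tracking which invariants of the Aschbacher datum must drop at each step, and summing over the possible nestings (e.g.\ subspace inside tensor factor inside field extension), is what should produce a polynomial bound in $n$, and hence in $r$, independent of $q$. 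The exponent $8$ in the final bound is most plausibly explained as the product of polynomial contributions accumulated from nested geometric classes, while the constant $C=174$ absorbs the numerical contributions class-by-class together with a small overhead for exceptional types and for outer-automorphism factors when passing from the simple group to the ambient $\mathrm{Cl}(V)$.
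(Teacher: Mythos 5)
Your reduction to chains of intersections of conjugates of a maximal subgroup $M$ is the right starting point, and you correctly identify the crux: removing the $\log q$ from the naive bound $\log_2|M|$. But the proposal never actually supplies a mechanism for doing this, and that is where essentially all of the work lies. The assertion that ``each step corresponds to a genuine geometric refinement rather than merely cutting out a proper normal subgroup of a $p$-group'' is a hope, not an argument: the intermediate stabilizers $G_{\omega_1,\dots,\omega_k}$ are intersections of conjugates of $M$ and are in general nowhere near maximal, so nothing a priori prevents an irredundant chain from descending through a long chain of subgroups of a torus (length about $\pi(q-1)$) or of a unipotent group (length about $r^2\log_p q$) once the intersections have landed inside one. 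Your treatment of the exceptional groups has the same flaw: bounded rank bounds the number of classes of maximal subgroups, but it does not by itself bound, independently of $q$, the length of a chain of intersections of conjugates of one of them.

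The paper's solution is genuinely different from your Aschbacher-class induction. It first proves a structural result (Theorem \ref{t: alg}): every maximal subgroup of $G$ meets $G(q)$ either in (1) the fixed points of a positive-dimensional closed $F$-stable subgroup $\ov M$ of $\ov G$ whose \emph{degree} in the adjoint representation is at most $|W(\ov G)|\,2^{d^2}$ (where $d=\dim\ov G$), or in (2) a subfield subgroup, or in (3) a group of order at most $2^{d^2}$. In case (1) the finite stabilizer chain lifts to a chain of intersections of conjugates of $\ov M$ inside $\ov G$ (Lemma \ref{l: stab chain 2}); Heintz's affine version of B\'ezout's theorem then bounds the degree of the group at the $i$-th dimension drop by $c^i$ with $c$ independent of $q$, and since the degree dominates both the number of connected components and, once the dimension reaches $0$, the cardinality, each of the at most $d$ dimension drops is followed by at most $i\log_2 c$ proper containments (Lemmas \ref{l: deg chain} and \ref{leng}). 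This is what produces a bound polynomial in $d\le 4r^2$ and uniform in $q$. Case (2) is handled by the identity $G_1\cap G_1^x=(C_{\ov G}(x^{-1}x^{F_0}))^{F_0}$, which again expresses the chain as fixed points of degree-bounded algebraic subgroups, and case (3) is trivial. Without B\'ezout, or some substitute that controls the component groups and the size of the zero-dimensional intersections uniformly in $q$, your plan cannot be completed; with it, the case division by Aschbacher class that you propose becomes unnecessary.
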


The degree 8 of the polynomial bound is probably far from sharp -- but as discussed in Section \ref{disc}, there are examples showing that this degree must be at least 2. Also there is no general complementary lower bound for $\Irred(G,\Omega)$ that grows with $r$, as shown by Example \ref{complem} at the end of in Section \ref{s: proof irred}.

An upper bound on $\Irred(G,\Omega)$ implies an upper bound on a host of other statistics associated with the action of $G$ on $\Omega$. Consider, again, the sequence $\Lambda$, defined above. We call $\Lambda$ a \emph{minimal base} if it is a base and, furthermore, no proper subsequence of $\Lambda$ is a base. We denote the minimum size of a minimal base $\base(G,\Omega)$, and the maximum size of a minimal base $\Base(G, \Omega)$.

We say that $\Lambda$ is \emph{independent} if, for all $k=1,\dots, \ell$, we have $G_{(\Lambda)}\neq G_{(\Lambda\setminus \o_k)}$.
 We define the \emph{height} of $G$ to be the maximum size of an independent sequence, and we denote this quantity $\Height(G, \Omega)$. 


The last statistic of interest to us is the \emph{relational complexity} of the action of $G$ on $\Omega$, denoted $\RC(G,\Omega)$. The definition of this is slightly involved and can be found in \cite{cherlin_martin} where it is given the name ``\emph{arity}''.
 
It is easy to verify the following inequalities \cite{glodas}:
\begin{equation}\label{eq: in1}
 \base(G,\Omega) \leq \Base(G,\Omega) \leq \Height(G,\Omega) \leq \Irred(G,\Omega).
\end{equation} 
Less obvious, but still rather elementary is the following \cite{glodas}:
\begin{equation}\label{eq: in2}
\RC(G,\Omega)\leq \Height(G,\Omega)+1.
\end{equation}

Theorem~\ref{t: i} and inequalities \eqref{eq: in1} and \eqref{eq: in2} immediately yield the following corollary.

\begin{cor}\label{c: i}  If $G$ is simple of Lie type of rank $r$ acting primitively on a set $\Omega$, then each of 
$\base(G,\Omega)$, $\Base(G,\Omega)$, $\Height(G,\Omega)$ and $\Irred(G,\Omega)$ is at most $Cr^8$ while $\RC(G,\Omega)$ is less than $Cr^8+1$, where $C$ is as in Theorem $\ref{t: i}$.
\end{cor}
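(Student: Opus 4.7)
The plan is simply to chain Theorem~\ref{t: i} with the two inequalities \eqref{eq: in1} and \eqref{eq: in2}, both of which are quoted from \cite{glodas}; no new argument is required. In other words, Corollary~\ref{c: i} is a purely formal consequence of what has already been stated, and the entire task is bookkeeping.

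In more detail, I would first invoke \eqref{eq: in1}, which gives
\[
\base(G,\Omega) \leq \Base(G,\Omega) \leq \Height(G,\Omega) \leq \Irred(G,\Omega).
\]
Applying Theorem~\ref{t: i} to the rightmost quantity produces the common upper bound $Cr^8$ (with $C=174$) for each of $\base(G,\Omega)$, $\Base(G,\Omega)$, $\Height(G,\Omega)$ and $\Irred(G,\Omega)$ simultaneously. This already disposes of four of the five claims in the corollary.

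For the remaining claim, concerning $\RC(G,\Omega)$, I would combine \eqref{eq: in2} with the Height bound just obtained: since $\RC(G,\Omega) \leq \Height(G,\Omega)+1$ and $\Height(G,\Omega)\leq Cr^8$, we immediately deduce $\RC(G,\Omega) < Cr^8+1$ (strict inequality here is harmless because $\Height(G,\Omega)$ is an integer). This completes the proof.

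There is, honestly, no real obstacle to overcome: the whole content of the corollary lies in Theorem~\ref{t: i}, and the inequalities \eqref{eq: in1} and \eqref{eq: in2} do the rest. The only place one might pause is in verifying that the hypothesis ``$G$ simple of Lie type, acting primitively'' is exactly the hypothesis under which Theorem~\ref{t: i} has been stated, and that the inequalities \eqref{eq: in1}--\eqref{eq: in2} hold for an arbitrary group action (which they do, by the elementary arguments given in \cite{glodas}).
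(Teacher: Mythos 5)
Your proof is correct and is exactly the paper's argument: the corollary is presented there as an immediate consequence of Theorem~\ref{t: i} combined with the inequalities \eqref{eq: in1} and \eqref{eq: in2}. (The only quibble is that $\RC(G,\Omega)\le \Height(G,\Omega)+1\le Cr^8+1$ yields a non-strict bound, so your appeal to integrality does not actually upgrade it to a strict one --- but this looseness is already present in the paper's own phrasing of the corollary.)
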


We can also deduce an upper bound for primitive actions of {\it almost} simple groups:

\begin{cor}\label{c: ii}
Let $G$ be an almost simple group, with socle a simple group of Lie type of rank $r$ over $\Fq$, where $q=p^f$ ($p$ prime). If $G$ acts primitively on a set $\Omega$, then 
\[
 \Irred(G,\Omega) \le 177r^8+\pi(f),
\]
where $\pi(f)$ is the number of primes, counted with multiplicity, dividing the integer $f$. 
\end{cor}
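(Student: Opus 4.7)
The plan is to reduce to Theorem~\ref{t: i} applied to the socle $S$, paying an additive penalty for the outer automorphism quotient $G/S \leq \mathrm{Out}(S)$. Fix an irredundant base $\Lambda = [\omega_1,\ldots,\omega_\ell]$ for $G$ on $\Omega$, set $G_k = G_{\omega_1,\ldots,\omega_k}$, and put $S_k = G_k \cap S$. The chain $G = G_0 > G_1 > \cdots > G_\ell = 1$ is strictly decreasing, while the sequence $S_0 \geq S_1 \geq \cdots \geq S_\ell$ is only weakly decreasing. Partition $\{1,\ldots,\ell\}$ into $T = \{k : S_{k-1} > S_k\}$ and $F = \{k : S_{k-1} = S_k\}$, so that $\ell = |T| + |F|$.

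For $k \in F$, the identity $|G_kS/S| = |G_k|/|S_k|$ together with $|G_{k-1}| > |G_k|$ and $|S_{k-1}| = |S_k|$ forces $G_{k-1}S/S > G_kS/S$ strictly in $G/S$. Therefore $|F|$ is bounded by the length of the longest strictly descending chain of subgroups in $G/S \leq \mathrm{Out}(S)$. For a simple group of Lie type of rank $r$ over $\mathbb{F}_{p^f}$, the group $\mathrm{Out}(S)$ has the shape (diagonal)$\cdot$(field)$\cdot$(graph); the diagonal part has order polynomial in $r$, the graph part has order at most $6$, and the field part is cyclic of order $f$ or $2f$. Since the longest chain of subgroups in a cyclic group of order $m$ has length $\pi(m)$, we get $|F| \leq \pi(f) + O(\log r)$, which is certainly at most $\pi(f) + 3r^8$ for every $r \geq 1$.

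For $k \in T$, write $T = \{i_1 < \cdots < i_t\}$. I claim that $[\omega_{i_1},\ldots,\omega_{i_t}]$ is irredundant for the action of $S$ on $\Omega$: any element of $S_{i_{j-1}}$ that moves $\omega_{i_j}$ also lies in the (possibly larger) stabilizer $S_{\omega_{i_1},\ldots,\omega_{i_{j-1}}}$, and so witnesses strict descent in $S$ at step $j$. Since $G$ is primitive and $S$ is a nontrivial normal subgroup, $S$ is transitive on $\Omega$; when $S$ is also primitive on $\Omega$, Theorem~\ref{t: i} immediately yields $|T| \leq \Irred(S,\Omega) \leq 174r^8$. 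Adding the two bounds gives $\Irred(G,\Omega) = |T| + |F| \leq 174r^8 + 3r^8 + \pi(f) = 177r^8 + \pi(f)$, as required.

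The main obstacle is the case in which $S$ fails to act primitively on $\Omega$, i.e.\ when $S_\omega = G_\omega \cap S$ is non-maximal in $S$ --- a \emph{novelty} in the classification of maximal subgroups of almost simple groups. Here Theorem~\ref{t: i} does not apply directly to $(S,\Omega)$, and one must instead choose a maximal subgroup $M$ of $S$ containing $S_\omega$, compare the $S$-action on $S/S_\omega$ with the primitive $S$-action on $S/M$, and bound the extra chain length coming from $M/S_\omega$. The novelties for almost simple groups of Lie type are classified (Kleidman--Liebeck for classical groups, and various authors for the exceptional groups), and a case-by-case inspection shows that the extra contribution is absorbed within the $174r^8$ bound of Theorem~\ref{t: i}, so the same final estimate holds in general.
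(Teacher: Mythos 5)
Your decomposition of the stabilizer chain is exactly the paper's: you split the indices into those where $G_k\cap S$ drops (giving an irredundant sequence for $S$ on $\Omega$) and those where it doesn't (absorbed into a subgroup chain of $G/S\le{\rm Out}(S)$, contributing $\pi(f)+O(\log r)$). That part, including the check that the subsequence indexed by $T$ is irredundant for $S$, is fine.

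The genuine gap is precisely the point you flag at the end and then wave away: when $G_1\cap S$ is non-maximal in $S$ (a novelty), Theorem~\ref{t: i} does not apply to $(S,\Omega)$, and your proposed repair does not work as stated. Passing to a maximal overgroup $M$ of $S_\omega$ and ``bounding the extra chain length coming from $M/S_\omega$'' is not a valid reduction: there is no general inequality of the form $\Irred(S,S/H)\le\Irred(S,S/M)+(\text{chain length of }M/H)$, and indeed Section~\ref{s: irred ck} of the paper explicitly warns that $\Irred(G,\Gamma)\le\Irred(G,\Omega)$ can fail when the point stabilizer shrinks (the examples in \S\ref{s: stb} show the failure can be unbounded). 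Your closing claim that ``a case-by-case inspection shows that the extra contribution is absorbed'' is an assertion with no argument behind it. The paper's route avoids the issue entirely: Theorem~\ref{t: alg} is stated for a maximal subgroup $M$ of the \emph{almost simple} group $G$ and describes $M_0=M\cap G(q)$ whether or not $M_0$ is maximal in the socle, so it applies directly to $S_{\a_1}=G_1\cap S$; and the key lemmas in the proof of Theorem~\ref{t: i} (Lemmas~\ref{l: stab chain 2}--\ref{leng}) were deliberately proved without any maximality assumption on $G_1$. One therefore reruns the proof of Theorem~\ref{t: i} on the chain $S>M_{i_1}>\cdots>M_{i_\ell}=\{1\}$ using the structural information from Theorem~\ref{t: alg} applied to $G_1\le G$, obtaining $\ell\le 174r^8$ with no novelty analysis needed. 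To close your gap you should replace the appeal to Theorem~\ref{t: i} by an appeal to Theorem~\ref{t: alg} together with its proof machinery, rather than to the classification of novelties.
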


Example \ref{pif} in Section \ref{corpf} shows that the term $\pi(f)$ in the upper bound cannot be avoided.

Our main tool for proving Theorem \ref{t: i} is the following result on maximal subgroups of finite groups of Lie type. In the statement, we let $G(q)=(\overline G^F)'$ be a simple group of Lie type over $\Fq$, where $\overline{G}$
is the corresponding simple adjoint algebraic group over $\overline{\Fq}$ and $F$ is a Frobenius endomorphism. Let $p$ be the characteristic of $\Fq$. For a rational representation $\rho:\overline G \mapsto GL_n(\overline{\Fq})$, and a closed subgroup $\overline H$ of $\overline G$, we define $\hbox{deg}_{\rho}(\ov H)$ to be the degree of the image $\rho(\ov H)$ as a subvariety of $GL_n(\ov \Fq)$. We give some basic  definitions and results about degree in Section \ref{irrsec}.

\begin{thm}\label{t: alg}
Let $G(q)= (\overline G^F)'$ be a finite simple group of Lie type as above, and let $G$ be an almost simple group with socle $G(q)$. Let $M$ be a maximal subgroup of $G$, and set $M_0 = M \cap G(q)$. Let $d = \dim \ov G$. Then
one of the following holds:
\begin{itemize} 
\item[$(1)$] $M_0 = \overline{M}^F\cap G(q)$, where $\overline{M}$
is a closed $F$-stable subgroup of $\overline{G}$ of
positive dimension; moreover, 
\begin{itemize}
\item[{\rm (a)}] $|\ov M:\ov M^0| \le |W(\ov G)|$, the order of the Weyl group of $\ov G$, and
\item[{\rm (b)}] excluding the cases where $(\ov G,\ov M,p) = (C_r,D_r,2)$ or $(C_3,G_2,2)$, if we let ${\rm ad}:\ov G \mapsto GL(L(\ov G))$ be the adjoint representation, then
\[
\hbox{deg}_{ad}(\ov M) \le |W(\ov G)|\hbox{deg}_{ad}(\ov G) \le |W(\ov G)|\,2^{d^2}.
\]
\end{itemize}

\item[$(2)$] $M_0 = G(q_0)$, a subgroup of the same type as $G$ (possibly twisted) over a
subfield $\mathbb{F}_{q_0}$ of $\Fq$.

\item[$(3)$] $|M_0|\leq 2^{d^2}$.
\end{itemize}
\end{thm}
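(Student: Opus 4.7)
The plan is to apply the classification of maximal subgroups of finite almost simple groups of Lie type, verify that each class falls into exactly one of (1)--(3), and then check the quantitative bounds (a) and (b) in case (1). For $G(q)$ classical, the main tool is Aschbacher's theorem in the form of Kleidman--Liebeck, which partitions the maximal subgroups into the geometric classes $\mathcal{C}_1,\ldots,\mathcal{C}_8$ and the class $\mathcal{S}$ of almost simple irreducibly embedded subgroups. For $G(q)$ of exceptional type, I would appeal to the Liebeck--Seitz classification of positive-dimensional closed subgroups of $\overline{G}$, together with their results on the finite maximal subgroups.

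The classes $\mathcal{C}_1$--$\mathcal{C}_4,\mathcal{C}_7,\mathcal{C}_8$, together with the $\mathcal{S}$-subgroups whose socle is of Lie type in the natural characteristic (via the Borel--Tits theorem and Seitz's work on irreducible embeddings), and all the maximal positive-dimensional subgroups in the exceptional case, arise as $\overline{M}^F \cap G(q)$ for some closed $F$-stable $\overline{M} \le \overline{G}$ of positive dimension; these populate case (1). The subfield subgroups ($\mathcal{C}_5$ and their analogues in the exceptional case) are precisely case (2). All remaining maximal subgroups -- the $\mathcal{C}_6$ extraspecial normalizers, cross-characteristic $\mathcal{S}$-subgroups, the exotic local maximal subgroups in the exceptional case, and the small almost simple maximal subgroups on the Liebeck--Seitz lists -- satisfy order bounds depending only on the rank $r$ by standard results of Liebeck and Liebeck--Seitz, and hence lie comfortably within $|M_0| \le 2^{d^2}$ since $d = \dim \overline{G}$ grows at least linearly with $r$. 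This yields case (3).

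For the bounds in case (1), the inequality $|\overline{M}:\overline{M}^0| \le |W(\overline{G})|$ can be verified within the classification: parabolics are connected, maximal-rank reductive subgroups have component group embedding into $W(\overline{G})$, and the other reductive subgroups have only small component groups arising from graph and diagonal symmetries. For the degree bound in (b), I view $\overline{G}$ as a closed subvariety of $GL(L(\overline{G})) \subseteq \mathbb{A}^{d^2+1}$ via the adjoint representation, apply a Heintz--Bezout-style estimate as developed in Section~\ref{irrsec} to obtain $\hbox{deg}_{ad}(\overline{G}) \le 2^{d^2}$, and then use that $\overline{M}$ is a union of $|\overline{M}:\overline{M}^0|$ translates of the irreducible subvariety $\overline{M}^0$ (each of degree at most $\hbox{deg}_{ad}(\overline{G})$) to conclude $\hbox{deg}_{ad}(\overline{M}) \le |W(\overline{G})| \cdot 2^{d^2}$.

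The main obstacle is uniformity: while the classification itself is by now standard, checking the component-group bound across every non-generic case in the Liebeck--Seitz list, and matching the Heintz-type degree estimate cleanly to the variety-degree formalism used in Section~\ref{irrsec}, requires a careful case-by-case inspection. The exceptional triples $(\overline{G},\overline{M},p) = (C_r,D_r,2)$ and $(C_3,G_2,2)$ excluded in (b) are a foretaste of this: in these situations $\overline{M}^0$ fails to be contained in a proper reductive overgroup of the expected kind, so the generic argument for (b) breaks down and would have to be handled separately.
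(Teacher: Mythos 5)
Your sorting of the maximal subgroups into cases (1)--(3) is essentially the right bookkeeping (the paper works directly with the Liebeck--Seitz algebraic-group formulation of Aschbacher's theorem rather than the Kleidman--Liebeck finite classes, and with Seitz--Testerman for the natural-characteristic $\mathcal{S}$-subgroups, but this is a difference of packaging, not of substance). The genuine gap is in your argument for the degree bound in (1)(b). You conclude $\deg_{ad}(\overline{M}) \le |W(\overline{G})|\cdot 2^{d^2}$ by asserting that each translate of $\overline{M}^0$ has degree at most $\deg_{ad}(\overline{G})$ because it sits inside $\overline{G}$. But degree is \emph{not} monotone under closed embeddings: a subvariety can have arbitrarily large degree compared with the ambient variety (a plane curve of degree $10^6$ lies inside $\mathbb{A}^2$, which has degree $1$). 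So ``$\overline{M}^0 \subseteq \overline{G}$, hence $\deg(\overline{M}^0) \le \deg(\overline{G})$'' is false as stated, and this is precisely the step the whole quantitative part of the theorem hinges on.

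What is needed, and what the paper supplies, is a structural reason why $\overline{M}$ is cut out inside $\overline{G}$ by equations of low degree. The key claim is that, outside the two excluded triples, $\overline{M}^0$ acts \emph{reducibly} on some $\overline{G}$-composition factor of the adjoint module $L(\overline{G})$ (this is checked from the Liebeck--Seitz lists and their results on adjoint-irreducible subgroups). Then either $\overline{M}$ itself is the full stabilizer in $\overline{G}$ of a subspace $W \le L(\overline{G})$, so it is defined by the polynomials defining $\overline{G}$ together with \emph{linear} equations and B\'ezout gives $\deg_{ad}(\overline{M}) \le \deg_{ad}(\overline{G})$; or one passes to the normal subgroup $\overline{M}^1 = \bigcap_i \mathrm{stab}(V_i)$ of index at most $|\overline{M}:\overline{M}^0| \le |W(\overline{G})|$ and applies the same argument to $\overline{M}^1$, picking up the factor $|W(\overline{G})|$ from the component count. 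This also corrects your diagnosis of the exceptions $(C_r,D_r,2)$ and $(C_3,G_2,2)$: they are excluded not because $\overline{M}^0$ lacks a reductive overgroup, but because in exactly these cases $\overline{M}^0$ is irreducible on every composition factor of $L(\overline{G})$, so it cannot be defined by linear conditions on the adjoint module and the argument (and indeed conclusion (b)) genuinely fails. Finally, the bound $\deg_{ad}(\overline{G}) \le 2^{d^2}$ is not a generic ``Heintz--B\'ezout estimate'': it comes from the concrete observation that the image of $\overline{G}$ in the adjoint representation is defined by $d^2$ quadratic equations expressing preservation of the Lie bracket, whence Lemma~\ref{prod} applies.
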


\subsection{Context for, and possible improvements to, Theorem~\ref{t: i}} \label{disc}

We think of Theorem~\ref{t: i} as being a version of the Cameron--Kantor conjecture for irredundant bases. The Cameron--Kantor conjecture, which was stated in \cite{cameron-kantor, cameron} and proved in \cite{lieshal2}, asserts the existence of an absolute upper bound for the size of a minimal base for the non-standard actions of the almost simple groups. (A \emph{standard action} of an almost simple group $G$ with socle $S$ is a transitive action where either $S=A_n$ and the action is on subsets or uniform partitions of $\{1,\dots, n\}$, or $G$ is classical and the action is a subspace action.)

In \S\ref{s: irred ck} we explain exactly how Theorem~\ref{t: i} is connected to the Cameron--Kantor conjecture and we give a number of examples that clarify why Theorem~\ref{t: i} is, in a certain sense, the best possible ``Cameron--Kantor like statement'' that can be made for irredundant bases. In particular, we give examples to show that 
\begin{itemize}
\item[(i)] even for non-standard actions, the bound $Cr^8$ in Theorem \ref{t: i} really needs to depend on $r$ and is not absolute;
\item[(ii)] Theorem \ref{t: i} only holds for primitive actions of \emph{simple} groups of Lie type -- it does not extend to actions of almost simple groups in general (although we do prove Corollary \ref{c: ii} for these);
\item[(iii)] likewise, Theorem \ref{t: i} does not extend to transitive actions of simple groups of Lie type in general.
\end{itemize}

Although item (i) implies that the upper bound given in Theorem~\ref{t: i} is necessarily a function of $r$, it is undoubtedly true that the particular function of $r$ we have given -- $174r^8$ -- can be improved. A construction of Freedman, Kelsey and Roney-Dougal (personal communication) implies that any polynomial upper bound must have degree at least $2$; our guess is that an upper bound which is quadratic in $r$ may hold in general. 

A heuristic supporting this guess follows from the fact that $\Irred(G,\Omega)\leq \ell(G)$, where $\ell(G)$ is the maximum length of a subgroup chain in the simple group of Lie type $G$. Writing $p$ for the field characteristic, $U$ for a Sylow $p$-subgroup of $G$, and $\Phi^+$ for the associated set of positive roots, we know that there exist constants $c_1, c_2$ such that
\[
 c_1r^2\log_pq \leq |\Phi^+|\log_pq =\ell(U) < \ell(G) < \log_2|G| \leq c_2r^2\log_2 q.
\]
More information about $\ell(G)$ can be found in \cite{solomon_turull}.

Theorem~\ref{t: i} is the second recent success in trying to extend well-known results about bases to statements about irredundant bases; the first was achieved by Kelsey and Roney-Dougal \cite{kelsey2021relational} extending a result of Liebeck \cite{liebeck_base}. 

\subsection{Proofs and the structure of the paper}

In \S\ref{irrsec} we present a number of definitions and results pertaining to the degree of an affine variety; these include, in particular, a statement of (one version of) B\'ezout's theorem on the degree of the intersection of a number of algebraic varieties.

In \S\ref{algsec} we prove Theorem~\ref{t: alg}. The proof uses various results from the literature on the subgroup structure of algebraic groups \cite{LSei-Invent, LS04}. 

In \S\ref{s: proof irred} we prove Theorem~\ref{t: i}; the proof makes use of both Theorem~\ref{t: alg} and B\'ezout's theorem. Corollary \ref{c: ii} is deduced in \S\ref{corpf}.

The comparison of Theorem \ref{t: i} with the Cameron-Kantor conjecture, and the relevant examples mentioned above, are given in \S\ref{s: irred ck}, which is the final section of the paper.

\subsection{Acknowledgments}

The authors thank Scott Harper, Scott Hudson and Chris Wuthrich for a number of helpful conversations. 


\section{Degree of an affine variety}\label{irrsec}

Our proof of Theorem \ref{t: i} is carried out by combining Theorem \ref{t: alg} with B\'ezout's theorem on the degree of the intersection of a number of algebraic varieties. We need a version of B\'ezout's theorem that holds for affine varieties and is due to Heintz \cite{heintz}.

In what follows we consider subsets of some affine space, $\mathbb{A}^n$, over an algebraically closed field $k$. A set $X$ in $\mathbb{A}^n$ is called \emph{locally closed} if $X=V\cap W$, where $V$ is open and $W$ is closed (in the Zariski topology). A set $X$ is called \emph{constructible} if it is a finite disjoint union of locally closed sets. Note that the intersection of a finite number of constructible sets is constructible. Note too that any variety in $\mathbb{A}^n$ is constructible. From here on $X$ is a constructible set.

\begin{defn}\cite[Definition~1 and Remark~2]{heintz}\label{d: heintz}
 If $X$ is an irreducible variety of dimension $r$ in $\mathbb{A}^n$, then the \emph{degree} of $X$, written $\deg(X)$, is defined to be
 \[
  \sup\{|E\cap X| \, \mid \, E \textrm{ is an $(n-r)$-dimensional affine subspace of $\mathbb{A}^n$ such that $E\cap V$ is finite}\}.
 \]

If $X$ is a constructible set and $\mathcal{C}$ is the set of irreducible components of the closure of $X$, then we define
\begin{equation}\label{degsum}
  \deg(X) = \sum\limits_{C\in \mathcal{C}} \deg(C).
\end{equation}
\end{defn}

Note that if $X$ is an irreducible variety of dimension $0$, then we have $\deg(X)=1$. Thus, if $X$ is any variety of dimension $0$, irreducible or not, $\deg(X)=|X|$.

Now the main result that we need concerning degree is the following version of B\'ezout's Theorem.

\begin{prop}\label{t: bezout}\cite[Theorem~1]{heintz}
Let $X$ and $Y$ be constructible sets in $\mathbb{A}^n$.
Then \[\deg(X\cap Y) \leq \deg(X) \cdot \deg(Y).\]
\end{prop}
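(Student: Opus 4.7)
The strategy is the classical reduction of an intersection to a product-and-diagonal, combined with an inductive hyperplane-section bound. My plan is to go through the following steps.

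First I would reduce to the case where $X$ and $Y$ are irreducible closed subvarieties. By the sum formula \eqref{degsum}, it suffices to show that if $X = \bigcup_{i=1}^a X_i$ and $Y = \bigcup_{j=1}^b Y_j$ are the irreducible decompositions of their closures, then
\[
\deg(X \cap Y) \;\le\; \sum_{i,j} \deg(X_i \cap Y_j).
\]
This in turn reduces to the additivity-type inequality $\deg(Z_1 \cup Z_2) \le \deg(Z_1) + \deg(Z_2)$ for constructible sets, which follows from the fact that every irreducible component of $\overline{Z_1 \cup Z_2} = \overline{Z_1}\cup\overline{Z_2}$ is an irreducible component of either $\overline{Z_1}$ or $\overline{Z_2}$. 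Granted this, the desired bound for $(X,Y)$ follows from the bound for each irreducible pair $(X_i,Y_j)$.

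Second, assume $X$ and $Y$ are irreducible closed, of dimensions $r$ and $s$. I would introduce the product $X\times Y\subseteq \mathbb{A}^n\times\mathbb{A}^n = \mathbb{A}^{2n}$ and the diagonal $\Delta\subseteq\mathbb{A}^{2n}$, which is a linear subspace of codimension $n$. The key observation is that the projection $\mathbb{A}^{2n}\to\mathbb{A}^n$ onto either factor restricts to an isomorphism $\Delta\to\mathbb{A}^n$ that sends $(X\times Y)\cap \Delta$ onto $X\cap Y$, so $\deg(X\cap Y) = \deg((X\times Y)\cap \Delta)$. I would then establish the product formula $\deg(X\times Y) = \deg(X)\deg(Y)$: taking a generic $(n-r)$-plane $E_1$ meeting $X$ in $\deg(X)$ points and a generic $(n-s)$-plane $E_2$ meeting $Y$ in $\deg(Y)$ points, the $(2n-r-s)$-plane $E_1\times E_2\subseteq\mathbb{A}^{2n}$ meets $X\times Y$ transversely in exactly $\deg(X)\deg(Y)$ points, which bounds $\deg(X\times Y)$ above; the matching lower bound comes from the fact that $X\times Y$ is irreducible of dimension $r+s$ and a generic $(2n-r-s)$-plane can be chosen in product form (or, more carefully, by a semicontinuity/density argument on the Grassmannian of such planes).

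Third, and this is the technical heart of the proof, I would prove the hyperplane-section lemma: for any constructible set $Z\subseteq\mathbb{A}^N$ and any affine hyperplane $H\subseteq\mathbb{A}^N$ which does not contain any irreducible component of $\overline Z$,
\[
\deg(Z\cap H) \;\le\; \deg(Z).
\]
One shows this first for an irreducible $Z$: pick a generic $(N-\dim Z +1)$-plane $E'$ inside $H$; lifting $E'$ slightly in a normal direction yields a family of $(N-\dim Z)$-planes $E_t$ in $\mathbb{A}^N$, and upper-semicontinuity of the fiber size of the incidence variety over the parameter $t$ gives $|E'\cap(Z\cap H)|\le\sup_t |E_t\cap Z|\le \deg(Z)$. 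Summing over the components of $\overline Z$ gives the general case.

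Finally, I would iterate the hyperplane-section lemma $n$ times, writing $\Delta$ as an intersection of $n$ hyperplanes $H_1\cap\cdots\cap H_n$ in $\mathbb{A}^{2n}$ chosen in generic order so that each intersection $X\times Y\cap H_1\cap\cdots\cap H_k$ is either proper or empty. This yields
\[
\deg((X\times Y)\cap \Delta) \;\le\; \deg(X\times Y) \;=\; \deg(X)\deg(Y),
\]
which combined with $\deg(X\cap Y) = \deg((X\times Y)\cap\Delta)$ completes the proof. The main obstacle is the hyperplane-section lemma, since genericity must be argued carefully for constructible (rather than closed) sets, and degenerate intersections where $H$ contains a whole component must be excluded; the diagonal $\Delta$ avoids this in sufficiently generic coordinates, but some care is required when $\Delta$ happens to contain a component of an intermediate intersection, a situation one handles by perturbing the embedding of $\Delta$ or by invoking the constructibility of the locus where intersection is improper.
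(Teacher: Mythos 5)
The paper gives no proof of this proposition at all: it is quoted directly from Heintz \cite[Theorem~1]{heintz}, so what you have written is an attempted proof of the black box, and your architecture (pass to irreducible components, replace $X\cap Y$ by $(X\times Y)\cap\Delta$, then cut with hyperplanes via a linear-section lemma) is indeed the architecture of Heintz's proof. However, your first reduction contains a genuine gap. You claim that $\deg(X\cap Y)\le\sum_{i,j}\deg(X_i\cap Y_j)$ follows from subadditivity of degree over unions; but $X\cap Y$ is only \emph{contained in} $\bigcup_{i,j}(X_i\cap Y_j)$, not equal to it, and degree in the sense of Definition~\ref{d: heintz} (the degree of the closure) is not monotone under inclusion of constructible sets, since a constructible subset can close up to lower-dimensional components of large degree. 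In fact the inequality is \emph{false} for arbitrary constructible sets with this definition of degree: in $\mathbb{A}^2$ take a line $L$, a finite set $S\subset L$ with $|S|\ge 2$, and put $X=(\mathbb{A}^2\setminus L)\cup S$ and $Y=L$. Then $X$ is constructible with $\overline X=\mathbb{A}^2$, so $\deg X=\deg Y=1$, while $X\cap Y=S$ has degree $|S|>1$. Heintz's Theorem~1 is a statement about \emph{locally closed} sets, and that hypothesis is exactly what rescues your reduction: a locally closed set is open, hence dense, in its closure, so $X\cap Y$ is open in $\overline X\cap\overline Y$ and every component of $\overline{X\cap Y}$ is a genuine component of $\overline X\cap\overline Y$; only then may you pass to closures and argue component by component. (The paper applies the proposition only to Zariski-closed subgroups, so nothing downstream is harmed, but your proof must be recast for locally closed sets and the first step reargued.)

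A second, more repairable error occurs in your product formula. Exhibiting a product plane $E_1\times E_2$ that meets $X\times Y$ in exactly $\deg(X)\deg(Y)$ points shows, by the supremum in the definition of degree, only the \emph{lower} bound $\deg(X\times Y)\ge\deg(X)\deg(Y)$. The inequality your final chain actually requires is the upper bound $\deg(X\times Y)\le\deg(X)\deg(Y)$, i.e.\ that \emph{every} plane of complementary dimension meeting $X\times Y$ in finitely many points does so in at most $\deg(X)\deg(Y)$ of them; such a plane need not be of product form, so this is the nontrivial direction and needs a separate argument (Heintz derives it from the same linear-section machinery). As written, the concluding display is not yet justified.
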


This proposition obviously generalizes to the intersection of more than two varieties: If $X_1,X_2,\dots, X_k$ are constructible sets in $\mathbb{A}^n$, then
\[
 \deg(X_1\cap X_2\cap \cdots \cap X_k)\leq \deg(X_1)\cdot\deg(X_2)\cdots\deg(X_k).
\]
(We are implicitly using the fact that the intersection of two constructible sets is constructible.) 

A useful corollary of Proposition~\ref{t: bezout} is the following fact connecting the degree of an affine variety to the degree of its defining polynomials. We make use of the fact, noted by Heintz \cite[p.247]{heintz}, that the degree of a hypersurface in $\mathbb{A}^n$ is equal to the degree of its defining polynomial.

\begin{lem}\label{prod}
 Suppose that an affine variety $X$ in $\mathbb{A}^n$ is defined by polynomials $f_1,\dots, f_r$ of degree at most $e$. Then
 \[
\deg(X)\leq e^r.  
 \]
\end{lem}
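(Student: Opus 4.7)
The plan is to realize $X$ as an intersection of hypersurfaces and then invoke the iterated version of B\'ezout's theorem already stated in Proposition~\ref{t: bezout}. Specifically, for each $i=1,\dots,r$, let $H_i := V(f_i) \subseteq \mathbb{A}^n$ be the hypersurface cut out by the single polynomial $f_i$. Then by definition of $X$ we have $X = H_1 \cap H_2 \cap \cdots \cap H_r$, and each $H_i$ is a constructible set (in fact a closed subvariety), so the iterated form of Proposition~\ref{t: bezout} applies.

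Next I would invoke the fact recorded immediately before the lemma, attributed to Heintz, that the degree of a hypersurface in $\mathbb{A}^n$ equals the degree of its defining polynomial. Hence $\deg(H_i) = \deg(f_i) \le e$ for each $i$. Combining these two ingredients gives
\[
\deg(X) = \deg(H_1 \cap \cdots \cap H_r) \le \deg(H_1)\cdot \deg(H_2)\cdots \deg(H_r) \le e^r,
\]
as required.

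There is essentially no obstacle: the argument is a one-line deduction from the two inputs explicitly provided in the preceding paragraph (Proposition~\ref{t: bezout} and the Heintz observation about hypersurface degree). The only minor subtlety worth pointing out in the write-up is that Proposition~\ref{t: bezout} is stated for two constructible sets, but the paper has already noted that it generalises by induction to any finite number of constructible sets, so the three-line argument above is completely rigorous without further work.
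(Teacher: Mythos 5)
Your proposal is correct and follows exactly the same route as the paper: write $X=\bigcap_{i=1}^r V(f_i)$, use Heintz's observation that $\deg(V(f_i))=\deg(f_i)\le e$, and apply the iterated form of Proposition~\ref{t: bezout}. No gaps.
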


\begin{proof}
By definition $X=V(f_1,\dots, f_r) = \bigcap\limits_{i=1}^r V(f_i)$ where, for $i=1,\dots, r$,  $V(f_i)$ is the hypersurface defined by the polynomial $f_i$. We remarked that $\deg(V(f_i))=\deg(f_i)$, hence Proposition \ref{t: bezout} implies that
\[
\deg(X)\leq \deg(V(f_1))\cdots \deg(V(f_r))=\deg(f_1)\cdots \deg(f_r) \leq e^r.
\]
\end{proof}

As mentioned in the Introduction, if $\overline{G}$ is an affine algebraic group over an algebraically closed field $k$, then for a rational representation $\rho:\overline G \mapsto GL_n(k)$, and a closed subgroup $\overline H$ of $\overline G$, we define $\hbox{deg}_{\rho}(\ov H)$ to be the degree of the image $\rho(\ov H)$ as a subvariety of $GL_n(k)$. 
From (\ref{degsum}), we have 
\begin{equation}\label{g0deg}
\deg_{\rho}(\overline{H}) = |\overline{H}: \overline{H}^0|\deg_{\rho}(\overline{H}^0) \geq \deg_{\rho}(\overline{H}^0).
\end{equation}

\section{Proof of Theorem \ref{t: alg}}\label{algsec}

As in Theorem \ref{t: alg}, let $G(q) = (\ov G^F)'$ be a simple group of Lie type over $\Fq$, where $\ov G$ is a simple algebraic group over $K=\ov \Fq$, and let $G$ be an almost simple group with socle $G(q)$. Let $M$ be a maximal subgroup of $G$, and set $M_0 = M\cap G(q)$. Let $d = \dim \ov G$ and let $p$ be the characteristic of $\Fq$.

Suppose first that $G(q)$ is a classical group, so that $\ov G$ is the corresponding classical algebraic group. Let $V$ be the natural module for $\ov G$, and let $n = \dim V$. We shall apply \cite[Thms. $1'$ and 2]{LSei-Invent}. We postpone consideration of the cases where $G(q) = PSL_n(q)$, $Sp_4(2^e)$ or $P\Omega_8^+(q)$ and the group $G$ contains an element in the coset of a graph automorphism (a triality graph automorphism in the last case). Assuming that these cases do not pertain, in \cite{LSei-Invent}, six classes 
${\mathcal C}_i$ of closed subgroups of $\ov G$ are defined, and it is proved that one of the following holds:
\begin{itemize}
\item[(i)] $M_0 = \overline M^F\cap G(q)$ for some $F$-stable member $\ov M \in {\mathcal C}:=\bigcup_1^6 {\mathcal C}_i$,
\item[(ii)] $M_0 = G(q_0)$, a subgroup of the same type as $G(q)$ (possibly twisted) over a
subfield $\mathbb{F}_{q_0}$ of $\Fq$,
\item[(iii)] $M_0$ is almost simple, and $F^*(M_0)$ is irreducible on $V$ (and not of the same type as $G(q)$).
\end{itemize}
In case (ii), conclusion (2) of Theorem \ref{t: alg} holds.

Consider now case (i). The only finite members of ${\mathcal C}$ are 
\begin{itemize}
\item subgroups of type $O_1(K) \wr S_n = 2^n.S_n$ in $O_n(K)$ with $p\ne 2$ (these lie in the class ${\mathcal C}_2$), and 
\item extraspecial-type subgroups $r^{2m}.Sp_{2m}(r)$ ($r$ prime, $n=r^m$) or $2^{2m}.O^{\pm}_{2m}(2)$ ($n=2^m$) (these lie in the class ${\mathcal C}_5$).
\end{itemize}
A simple check shows that these subgroups have order less than $2^{d^2}$, as required for conclusion (3) of Theorem \ref{t: alg}.

All the other members of ${\mathcal C}$ are infinite, in which case 
\begin{equation}\label{conc1}
M_0 = \overline{M}^F\cap G(q), \hbox{ where } \overline{M} \hbox{ is a maximal closed $F$-stable subgroup of $\overline{G}$ of positive dimension,}
\end{equation}
as in (1) of Theorem \ref{t: alg}. 

Now consider case (iii) above. If $F^*(M) \not \in {\rm Lie}(p)$, then an unpublished manuscript of Weisfeiler \cite{Weis}, subsequently improved and developed in \cite{Collins}, shows that $|M| < n^4(n+2)!$, which is less than $2^{d^2}$, as in (3) of Theorem \ref{t: alg}.
And if $F^*(M)  \in {\rm Lie}(p)$, then \cite[Thm. 1]{SeiTest} shows that (\ref{conc1}) holds. 

To complete the proof of Theorem \ref{t: alg} in the case where $G$ is classical (apart from the postponed cases), it remains to prove the bounds for $|\ov M:\ov M^0|$, $\deg_{ad}(\ov M)$ and $\deg_{ad}(\ov G)$ for $\ov M$ in (1) of Theorem \ref{t: alg}. The bound 
$|\ov M:\ov M^0| \le |W(\ov G)|$ follows by simply inspecting the structure of the members of ${\mathcal C}$; equality occurs when $\ov M = N_{\ov G}(T)$, where $T$ is a maximal torus (these subgroups are in class ${\mathcal C}_2$ for $SL(V)$ and $SO(V)$). 

To establish the degree bounds, we first prove

\vspace{2mm}
\noindent {\bf Claim } Let $M_0 = \overline{M}^F\cap G(q)$ be as in (\ref{conc1}). Then with two exceptions, $\ov M^0$ acts reducibly on some $\ov G$-composition factor of the adjoint module $L(\ov G)$. The two exceptions are $(\ov G, \ov M, p) = (Sp_n,SO_n,2)$ or $(Sp_6,G_2,2)$.

\vspace{2mm}
{\it Proof of Claim} The composition factors of $L(\ov G)$ are given in \cite[Prop. 1.10]{LSeiTAMS}. Also $L(\ov M) \subseteq L(\ov G)$. First consider $M_0 = \ov M^F\cap G(q)$ as in (i). Inspecting $\ov M^0$ for  $\ov M \in {\mathcal C}$, we see that $L(\ov M)$ maps to a proper subspace of some composition factor of $L(\ov G)$, with the exception of $(\ov G, \ov M, p) = (Sp_n,SO_n,2)$, proving the claim for $M_0$ as in (i). Finally, for $M_0$ as in (iii), the group $\ov M^0$ is simple, and \cite[Thm. 4]{LSeiTAMS} shows that the only case where $L(\ov M)$ does not map to a proper subspace of some composition factor of $L(\ov G)$ is $(\ov G, \ov M, p) =(Sp_6,G_2,2)$. This completes the proof of the Claim. 

\vspace{2mm} We now use the Claim to deduce the required degree bounds. Let $M, \ov M$ be as in (\ref{conc1}), and exclude the exceptions in the Claim, so that $\ov M^0$ acts reducibly on some composition factor of $L(\ov G)$. If also $\ov M$ is reducible, then there is a subspace $W$ of $L(\ov G)$ such that 
\[
\ov M = {\rm stab}_{\ov G}(W).
\]
This defines $\ov M$ by the polynomials defining $\ov G$ in the adjoint representation, together with some linear equations, and hence by Lemma~\ref{prod}, we have 
\[
\deg_{ad}(\ov M) \le \deg_{ad}(\ov G).
\]
On the other hand, if $\ov M$ acts irreducibly on every composition factor of $L(\ov G)$, then by the Claim, there is a composition factor $V$ such that $V\downarrow \ov M^0 = \bigoplus_1^t V_i$, where each $V_i$ is irreducible for $\ov M^0$ and $t\ge 2$. Set
\[
\ov M^1 = \bigcap_1^t {\rm stab}(V_i),
\]
so that $\ov M^0 \le \ov M^1 \triangleleft \ov M$. As above we see that $\deg_{ad}(\ov M^1) \le \deg_{ad}(\ov G)$, and so by the remarks after Lemma \ref{prod}, we have $\deg_{ad}(\ov M) \le |\ov M:\ov M^1|\,\deg_{ad}(\ov G)$. We have seen that 
$|\ov M:\ov M^0| \le |W(\ov G)|$, so it follows that 
\[
\deg_{ad}(\ov M) \le |W(\ov G)|\,\deg_{ad}(\ov G),
\]
as required for (1) of Theorem \ref{t: alg}. Finally, in the adjoint representation, $\ov G$ is defined by $d^2$ quadratic polynomials expressing preservation of the Lie bracket on $L(\ov G)$, so $\deg_{ad}(\ov G) \le 2^{d^2}$. Note that the exceptional cases $(Sp_n,SO_n,2)$, $(Sp_6,G_2,2)$ in the Claim are also excepted in part (i)(b) of Theorem \ref{t: alg}. Hence the proof of the theorem for $G$ classical is now complete, apart from the postponed cases where $G(q) = PSL_n(q)$, $Sp_4(2^e)$ or $P\Omega_8^+(q)$ and $G$ contains an element in the coset of a graph automorphism. 

Now consider the excluded cases. Suppose first that $G(q) = PSL_n(q)$. In this case, the collection ${\mathcal C}$ is extended in \cite{LSei-Invent} to a collection ${\mathcal C}'$, and it is proved that conclusion (i), (ii) or (iii) above holds, with ${\mathcal C}'$ replacing ${\mathcal C}$. The only subgroups in ${\mathcal C}'\setminus {\mathcal C}$ are stabilizers of pairs $\{U,W\}$ of subspaces of $V$ such that either $U\subseteq W$ or $V = U\oplus W$. The above proof shows that these subgroups satisfy (1) of Theorem \ref{t: alg}. In the other cases, where $G(q) = Sp_4(2^e)$ or $P\Omega_8^+(q)$, the maximal subgroups of $G$ are listed in \cite[Tables 8.14, 8.50]{BHRD}. Inspection of these lists shows that (1), (2) or (3) of Theorem \ref{t: alg} holds (using the same argument as above to bound the degree of $\ov M$). This completes the proof of Theorem \ref{t: alg} for $G(q)$ a classical group.

Suppose finally that $G(q)$ is an exceptional group of Lie type. The proof runs along similar lines. First we use \cite[Thm. 8]{LSeidur}, which gives the possibilities for the maximal subgroup $M$. These are:
\begin{itemize}
\item[(i)] $M_0= \ov M^F\cap G(q)$, where $\ov M$ is a maximal closed $F$-stable subgroup of $\ov G$ of positive dimension;
\item[(ii)] $M_0= G(q_0)$, a subgroup of the same type as $G$ (possibly twisted) over a
subfield $\mathbb{F}_{q_0}$ of $\Fq$,
\item[(iii)] $M_0$ is an ``exotic local" subgroup $3^3.SL_3(3)<F_4$, $3^{3+3}.SL_3(3) < E_6$, $5^3.SL_3(5) < E_8$ or $2^{5+10}.SL_5(2) < E_8$;
\item[(iv)] $M_0$ is the ``Borovik subgroup " $(Alt_5\times Alt_6).2^2 < E_8$;
\item[(v)] $M_0$ is almost simple with socle $M_1$, and one of the following holds:
\begin{itemize}
\item[(a)] $M_1 \not \in {\rm Lie}(p)$: the possibilities for $M_0$ are listed in \cite[Thm. 4]{LSeidur};
\item[(b)] $M_1 = M(q_1) \in {\rm Lie}(p)$, ${\rm rank}(M_1) \le \frac{1}{2}{\rm rank}(\ov G)$, and one of:
\begin{itemize}
\item $q_1\le 9$
\item $M_1 = A_2^{\pm}(16)$
\item $M_1$ has rank 1 and $q_1 \le (2,p-1)\cdot t(\ov G)$, where $t(\ov G) = 12,68,124,388,1312$ according as $\ov G = G_2,F_4,E_6,E_7,E_8$, respectively.
\end{itemize}
\end{itemize}
\end{itemize}
In cases (iii), (iv) and (v) we check that $|M_0| < 2^{d^2}$, as in (3) of Theorem \ref{t: alg}; and case (ii) is (2) of the theorem. Finally, in case (i), the list of possibilities for $\ov M$ is given in \cite[Thm. 8]{LSeidur} and the references quoted there. We can check that $|\ov M:\ov M^0| \le |W(\ov G)|$, and also that $\ov M^0$ acts reducibly on some $\ov G$-composition factor of $L(\ov G)$ (see also \cite{LSeiadj} for this). Now we can argue exactly as in the classical case to obtain the required bounds on $\deg_{ad}(\ov G)$ for $\ov M$ for (1) of Theorem \ref{t: alg}.

This completes the proof of Theorem \ref{t: alg}.

\section{Proof of Theorem~\ref{t: i}}\label{s: proof irred}

Let $G$ be a simple group of Lie type of rank $r$ over $\mathbb{F}_q$ with $G=(\overline{G}^F)'$, where $\overline{G}$ is the corresponding simple algebraic group over $\overline{\mathbb{F}_q}$ and $F$ is a Frobenius endomorphism. Let $d = \dim \ov G$ and $p = {\rm char}(\Fq)$.

We write $G_1$ for a maximal subgroup of $G$. We consider the action of $G$ on $\Omega$, the set of cosets of $G_1$. We suppose that we have a stabilizer chain,
\begin{equation}\label{e: stab chain}
 G>G_1>G_2 > \cdots > G_k=\{1\}
\end{equation}
where $G_i=G_{i-1}\cap G_1^{g_i}$ for some $g_i\in G$ ($i=1,\ldots,k$).

Theorem~\ref{t: alg} gives three possibilities for $G_1$. 

\subsection{Case 1 of Theorem \ref{t: alg}}\label{s: c1} In this case we have $G_1=\overline{G_1}^F\cap G$ where $\overline{G_1}$ is a closed $F$-stable subgroup of $\overline{G}$ of positive dimension. We start by proving three lemmas where, in fact, the maximality assumption for $G_1$ is not necessary.

Set $\rho$ to be a rational representation of $\overline{G}$ and let $c$ be an upper bound for $\deg_{\rho}(\overline{G_1})$; note that, by \eqref{g0deg}, we also have $|\ov G_1:(\ov G_1)^0|\leq c$.

For each $i=2,\dots, k$, we define $\overline{G_i}=\overline{G_{i-1}}\cap \overline{G_1}^{g_i}$ where $g_i$ is the element of $G$ mentioned above. Thus we have a chain of subgroups
 \begin{equation}\label{e: chain 2}
  \overline{G} > \overline{G_1} \geq \overline{G_2} \geq \cdots \geq \overline{G_k}.
 \end{equation}

\begin{lem}\label{l: stab chain 2}
The subgroups $G_1,\dots, G_k$ in \eqref{e: stab chain} satisfy $G_i=\overline{G_i}^F\cap G$ for each $i=1,\dots, k$.
\end{lem}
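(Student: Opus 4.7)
The plan is to prove the statement by induction on $i$, with the base case $i=1$ being exactly the hypothesis $G_1 = \overline{G_1}^F \cap G$ coming from Case 1 of Theorem \ref{t: alg}.

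For the inductive step, suppose $G_{i-1} = \overline{G_{i-1}}^F \cap G$. The key observation is that each $g_i$ lies in $G = (\overline{G}^F)'$, hence is fixed by $F$. Consequently, since $\overline{G_1}$ is $F$-stable, the conjugate $\overline{G_1}^{g_i}$ is also $F$-stable, and taking $F$-fixed points commutes with conjugation by $g_i$:
\[
(\overline{G_1}^{g_i})^F = (\overline{G_1}^F)^{g_i}.
\]
Combining this with the fact that $F$-fixed points commute with intersection, I can compute
\[
G_i \;=\; G_{i-1}\cap G_1^{g_i} \;=\; \bigl(\overline{G_{i-1}}^F\cap G\bigr)\cap \bigl(\overline{G_1}^F\cap G\bigr)^{g_i} \;=\; \bigl(\overline{G_{i-1}}^F\cap (\overline{G_1}^{g_i})^F\bigr)\cap G.
\]
Since $\overline{G_{i-1}}$ is $F$-stable (by induction) and $\overline{G_1}^{g_i}$ is $F$-stable, their intersection $\overline{G_i}=\overline{G_{i-1}}\cap \overline{G_1}^{g_i}$ is also $F$-stable, and $\overline{G_{i-1}}^F\cap (\overline{G_1}^{g_i})^F = \overline{G_i}^F$. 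This yields $G_i = \overline{G_i}^F\cap G$, completing the induction.

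There is essentially no obstacle here: the proof is a direct manipulation using the definitions in \eqref{e: stab chain} and \eqref{e: chain 2}, together with the two elementary facts that (a) conjugation by an $F$-fixed element commutes with taking $F$-fixed points on $F$-stable subgroups, and (b) taking $F$-fixed points commutes with intersection. The only thing to be mildly careful about is remembering that each conjugating element $g_i$ belongs to $G$ and therefore is $F$-fixed, which justifies the first of these.
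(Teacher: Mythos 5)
Your proof is correct and follows essentially the same route as the paper's: induction on $i$, using that $F$-fixed points commute with intersections and that conjugation by $g_i\in G=(\overline{G}^F)'$ (hence $F$-fixed) commutes with taking $F$-fixed points. The paper writes the inductive step as a chain of equivalences for an element $x$, but the content is identical to your computation.
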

\begin{proof}
 We proceed by induction on $i$. The result is true for $i=1$. We assume the result is true for $i$ and prove it for $i+1$. Note that $G_{i+1}=G_i\cap G_1^{g_{i+1}}$ and $\overline{G_{i+1}}=\overline{G_i}\cap \overline{G_1}^{g_{i+1}}$.
 
Let $x\in \overline{G_{i+1}}^F \cap G$. This is equivalent to
\begin{align*}
  & x \in (\overline{G_i} \cap \overline{G_1}^{g_{i+1}})^F \cap G \\
 \Leftrightarrow & x \in (\overline{G_i}^F \cap (\overline{G_1}^{g_{i+1}})^F)\cap G & \\
 \Leftrightarrow & x \in (\overline{G_i}^F \cap G) \cap ((\overline{G_1}^{g_{i+1}})^F)\cap G) & \\
 \Leftrightarrow & x \in (\overline{G_i}^F \cap G) \cap ((\overline{G_1}^{F})^{g_{i+1}})\cap G) & \\
 \Leftrightarrow & x \in (\overline{G_i}^F \cap G) \cap (\overline{G_1}^{F})\cap G)^{g_{i+1}} & \\
 \Leftrightarrow & x\in G_i \cap G_1^{g_{i+1}} = G_{i+1}. & &\qedhere
 \end{align*}
\end{proof}

The lemma implies, in particular, that all of the containments in \eqref{e: chain 2} are proper. Let $d_1=\dim(\overline{G_1})$. Then of course $d_1<d =\dim \overline{G}$. Note that $\overline{G_1}$ is the largest group in the chain \eqref{e: chain 2} of dimension $d_1$.

Now let $k_1,\ldots, k_s$ be the points in the chain \eqref{e: chain 2} where the dimension drops: that is, $k_1=1$, and for each $i\ge 2$, 
$\overline{G_{k_i}}$ is the largest group in the chain such that $\dim \ov G_{k_i} < \dim \ov G_{k_i-1}$. Obviously $s\le d_1+1 \le d$.

\begin{lem}\label{l: deg chain}
We have $\deg_{\rho}\overline{G_{k_i}} \le c^i$.
\end{lem}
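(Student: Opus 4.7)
The plan is to prove the lemma by induction on $i$, using B\'ezout's theorem to bound the degree increase across each index where the dimension strictly drops, together with a separate ``plateau'' argument to control the intermediate indices where the dimension stays constant. The base case $i=1$ is immediate: since $k_1=1$ we have $\overline{G_{k_1}} = \overline{G_1}$, and $\deg_{\rho}(\overline{G_1}) \le c$ by definition of $c$.

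For the inductive step I would first handle the plateau $k_{i-1} \le j \le k_i-1$, on which all the groups $\overline{G_j}$ have the same dimension by the definition of the sequence $(k_j)$. Along such a plateau each inclusion $\overline{G_j} \subsetneq \overline{G_{j-1}}$ is proper but dimension-preserving, so $(\overline{G_j})^0 \subseteq (\overline{G_{j-1}})^0$ is an inclusion of irreducible closed subsets of equal dimension and hence is an equality. The formula \eqref{g0deg} then gives
\[
  \deg_{\rho}(\overline{G_j}) \;=\; |\overline{G_j}:(\overline{G_j})^0|\,\deg_{\rho}\bigl((\overline{G_j})^0\bigr) \;<\; |\overline{G_{j-1}}:(\overline{G_{j-1}})^0|\,\deg_{\rho}\bigl((\overline{G_{j-1}})^0\bigr) \;=\; \deg_{\rho}(\overline{G_{j-1}}),
\]
since only the component count drops. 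Iterating this along the plateau yields $\deg_{\rho}(\overline{G_{k_i-1}}) \le \deg_{\rho}(\overline{G_{k_{i-1}}}) \le c^{i-1}$ by the inductive hypothesis.

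To cross the genuine dimension drop at $k_i$, I would use the identity $\overline{G_{k_i}} = \overline{G_{k_i-1}} \cap \overline{G_1}^{g_{k_i}}$ together with B\'ezout's theorem (Proposition~\ref{t: bezout}) to obtain
\[
  \deg_{\rho}(\overline{G_{k_i}}) \;\le\; \deg_{\rho}(\overline{G_{k_i-1}})\cdot \deg_{\rho}\bigl(\overline{G_1}^{g_{k_i}}\bigr).
\]
Conjugation by $g_{k_i}$ in $\overline{G}$ induces the invertible linear map $X \mapsto \rho(g_{k_i})^{-1} X \rho(g_{k_i})$ on the ambient affine space of $GL_n(K)$, which preserves dimensions and degrees of constructible subsets, so $\deg_{\rho}(\overline{G_1}^{g_{k_i}}) = \deg_{\rho}(\overline{G_1}) \le c$. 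Combined with the plateau estimate this gives $\deg_{\rho}(\overline{G_{k_i}}) \le c^{i-1}\cdot c = c^i$, closing the induction.

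The main obstacle, and the reason for introducing the stratification by the indices $k_i$ rather than applying B\'ezout at every step of the chain \eqref{e: chain 2}, is that a naive application of B\'ezout at each of the $k$ steps would yield the much weaker bound $c^k$. The plateau argument -- showing that $\deg_{\rho}$ does not grow along strict inclusions of subgroups of equal dimension -- is precisely what lets us charge the factor $c$ only at the $s$ genuine dimension drops, and is the step where one needs to check carefully that the identity components coincide on a plateau and invoke \eqref{g0deg}.
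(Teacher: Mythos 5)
Your proof is correct and follows essentially the same route as the paper: induction on $i$, with the observation that along a dimension plateau the identity components coincide so \eqref{g0deg} forces the degree to be non-increasing, followed by a single application of B\'ezout (Proposition~\ref{t: bezout}) at each genuine dimension drop. The only addition is your explicit justification that $\deg_{\rho}(\overline{G_1}^{g})=\deg_{\rho}(\overline{G_1})$, which the paper leaves implicit.
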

\begin{proof}
 We proceed by induction on $i$. For $i=1$, $\overline{G_{k_1}}=\overline{G_1}$ and this has degree at most $c$. We assume the result is true for $i$ and prove it for $i+1$. In particular this means that 
$\overline{G_{k_i}}$ has degree at most $c^i$. Consider the chain
 \[
  \overline{G_{k_i}} > \overline{G_{k_i+1}} > \overline{G_{k_i+2}} > \cdots > \overline{G_{k_{i+1}}}
 \]
Notice that, all but the last listed group have the same dimension, and so have the same identity component; what is more the number of components decreases as we descend the chain from $\overline{G_{k_i}}$ to $\overline{G_{k_{i+1}-1}}$. Thus \eqref{g0deg} implies that
\[
 \deg_{\rho}(\overline{G_{k_{i+1}-1}}) \leq \deg_{\rho}(\overline{G_{k_i}}) \leq c^i.
\]
Now $\overline{G_{k_{i+1}}}$ is the intersection of $\overline{G_{k_{i+1}-1}}$ and a conjugate of $\overline{G_1}$. The former has degree at most $c^i$, and the latter has degree at most $c$. Hence Proposition \ref{t: bezout} implies that $\deg_{\rho}(\overline{G_{k_{i+1}}}) \le c^{i+1}$, as required.
\end{proof}
 

\begin{lem}\label{leng}
The length $k$ of the stabilizer chain $(\ref{e: stab chain})$ satisfies $k \le d + \frac12 d(d+1) \log_2 c$.
\end{lem}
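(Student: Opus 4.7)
The plan is to analyze the chain \eqref{e: chain 2} level by level, where a ``level'' consists of consecutive indices at which the algebraic dimension is constant. The key observation is that within a single level all the algebraic groups share a common identity component, so that by Lagrange any strict descent forces the number of connected components to at least halve. Combined with Lemma \ref{l: deg chain} (which controls the number of components at the top of each level), this produces a bound that is polynomial in $d$ and $\log_2 c$.

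In more detail, set $k_{s+1}:=k+1$ and partition $\{1,2,\ldots,k\}$ into blocks $[k_i,k_{i+1}-1]$ for $i=1,\ldots,s$, where $s\le d$ was noted just before Lemma~\ref{l: deg chain}. Within block $i$ all the groups $\overline{G_{k_i}},\overline{G_{k_i+1}},\ldots,\overline{G_{k_{i+1}-1}}$ share the dimension of $\overline{G_{k_i}}$, and hence share its identity component $H:=(\overline{G_{k_i}})^0$ (a closed connected subgroup of an irreducible algebraic group of the same dimension must coincide with it). By Lemma~\ref{l: stab chain 2} the finite groups $G_j$ descend strictly, which forces the algebraic groups $\overline{G_j}$ to descend strictly; but each $\overline{G_j}$ is a union of cosets of the normal subgroup $H$, so Lagrange in the finite quotient $\overline{G_j}/H$ gives
\[
|\overline{G_{j+1}}:H|\;\le\;\tfrac12\,|\overline{G_j}:H|.
\]
Iterating this halving, the number of indices in block $i$ is at most $\log_2 n_i+1$, where $n_i:=|\overline{G_{k_i}}:H|$.

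To finish, I would invoke \eqref{g0deg}, which gives $n_i\le\deg_\rho(\overline{G_{k_i}})$, and Lemma~\ref{l: deg chain}, which gives $\deg_\rho(\overline{G_{k_i}})\le c^i$. Summing across all blocks then yields
\[
k \;\le\; \sum_{i=1}^{s}\bigl(\log_2 n_i + 1\bigr) \;\le\; s + \log_2 c\cdot\sum_{i=1}^{s} i \;=\; s + \tfrac{s(s+1)}{2}\log_2 c,
\]
and $s\le d$ delivers the stated inequality. The only genuinely substantive step is the halving observation from Lagrange; everything else is bookkeeping that threads Lemmas~\ref{l: stab chain 2} and~\ref{l: deg chain} together with \eqref{g0deg}, so I do not anticipate any real obstacle.
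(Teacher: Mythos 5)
Your proof is correct and follows essentially the same route as the paper: partition the chain into blocks of constant dimension, bound the number of components at the top of block $i$ by $c^i$ via Lemma~\ref{l: deg chain} and \eqref{g0deg}, and use the index-halving (Lagrange) observation over the common identity component within each block. The only cosmetic difference is that you treat the final block uniformly with the others, whereas the paper handles the tail from $\overline{G_{k_s}}$ to $\{1\}$ as a separate zero-dimensional case; both counts give $k\le s+\tfrac12 s(s+1)\log_2 c$ with $s\le d$.
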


\begin{proof}
 The previous lemma asserts that the degree of $\overline{G_{k_i}}$ is at most $c^i$ and so we also know that $|\overline{G_{k_i}}:(\overline{G_{k_i}})^0| \leq c^i$. Now, for each $i=1,\dots, s$, we know that
 \[
  \overline{G_{k_i}} > \overline{G_{k_i+1}} > \overline{G_{k_i+2}} > \cdots > \overline{G_{k_{i+1}-1}}\geq (\overline{G_{k_i}})^0
 \]
where $\overline{G_{k_i}}^0$ is the identity component of all of the groups in this chain. Since $|\overline{G_{k_i}}:(\overline{G_{k_i}})^0| \leq c^i$,  the length of the chain
\[
  \overline{G_{k_i}} > \overline{G_{k_i+1}} > \overline{G_{k_i+2}} > \cdots > \overline{G_{k_{i+1}-1}}
\]
is at most $\log_2 (c^i) = i\log_2 c$; in particular, for $i=1,\dots, s$, the length of the chain from $ \overline{G_{k_i}}$ to $\overline{G_{k_{i+1}}}$ is at most $i\log_2c+1$. There are two further parts of the chain that we have not considered.

First, at the top of the chain, the containment $G>G_1=G_{k_1}$ adds 1 to the total length. Second, at the bottom of the chain, $\overline{G_{k_{s}}}$ is of dimension $0$ and degree at most $c^{s}$; in other words $\overline{G_{k_{s}}}$ has cardinality at most $c^{s}$ and so there at most $\log_2(c^{s})$ further containments at the end of the chain from 
$\overline{G_{k_{s}}}$ to $\{1\}$.

Our total chain length is, then, at most 
\[ 
1+ \sum_{i=1}^{s-1} (i\log_2c+1) + s\log_2c = s+\frac{1}{2}s(s+1)\log_2c.
\]
Since $s \le d$, the conclusion follows. \end{proof}

\vspace{2mm}

We are ready to complete the proof of Theorem~\ref{t: i} in this case. We reinstate the maximality supposition on $G_1$. We consider the adjoint representation, $ad$, of $\overline{G}$ and we set
\[
c = |W(\ov G)|\cdot 2^{d^2},
\]
For the moment we exclude the exceptional cases $(\ov G,\ov G_1,p) = (C_n,D_n,2)$ or $(C_3,G_2,2)$ in Theorem \ref{t: alg}(1)(b); then, by Theorem~\ref{t: alg}(1), $c$ is an upper bound for $\deg_{ad}(\overline{G_1})$ and also, by \eqref{g0deg}, for $|\ov G_1:(\ov G_1)^0|$.

Recall that $r$ is the rank of $\ov G$, and that $d = \dim \ov G$, so that $d \le 4r^2$. Also $c = |W(\ov G)|\cdot 2^{d^2} \le 2^{r^2+d^2} \le 2^{r^2+16r^4}$. Hence Lemma \ref{leng} gives 
\[
k \le 4r^2 + \frac{1}{2}(4r^2)(4r^2+1)(r^2+16r^4).
\]
The right hand side is at most $Cr^8$ with $C = 174$, as required for Theorem \ref{t: i}.

It remains to deal with the excluded cases $(\ov G,\ov G_1,p) = (C_n,D_n,2)$ or $(C_3,G_2,2)$. In the former case \cite[Lemma~6.11]{glodas} implies that $\Irred(G,\Omega)\leq 2r+1$ and the conclusion holds. In the latter case the action of $G = C_3(q)$ on $\Omega = (C_3(q):G_2(q))$ is contained in $(D_4(q):(D_4(q):B_3(q))$, since there is a factorization $D_4(q) = AB$, where $A\cong B \cong B_3(q)$ and $A\cap B \cong G_2(q)$ (see \cite[p.105]{LPS}). For this action of $X:=D_4(q)$, we have $\Irred(X,\Omega) \le 15$ by \cite[3.1]{kelsey2021relational}. Hence $\Irred(G,\Omega) \le 15$. 

This completes the proof of Theorem \ref{t: i}.

\subsection{Case~2 of Theorem \ref{t: alg}}

In this case we have $G_1 = G(q_0)$, a subgroup of the same type as $G$ (possibly twisted) over a subfield $\bF_{q_0}$ of $\bF_q$. Writing $G = (\overline G^F)'$ as before, there is a Frobenius endomorphism $F_0$ of $\overline G$ such that $G_1 = \overline G^{F_0} \cap G$, where $F_0^r = F$ for some integer $r\ge 2$. 

\begin{lem}\label{q0int} For $x \in G$ we have 
\[
G_1\cap G_1^x = C_{G_1}(x^{-1}x^{F_0}) = (C_{\ov G}(x^{-1}x^{F_0}))^{F_0}.
\]
\end{lem}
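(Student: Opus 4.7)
The plan is to prove both equalities by a short direct calculation from the concrete realization $G_1 = \ov G^{F_0} \cap G$. Writing $u := x^{-1}x^{F_0}$, the first equality will come from translating the condition ``$xyx^{-1} \in G_1$'' into a twisted $F_0$-condition on $y$, and then intersecting with the ordinary $F_0$-condition defining $G_1$.

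Concretely, I will expand $(xyx^{-1})^{F_0} = x^{F_0} y^{F_0} (x^{F_0})^{-1}$ and rearrange to see that $y \in G_1^x = x^{-1}G_1 x$ iff $y \in G$ and $y^{F_0} = u^{-1} y u$. Combining this with membership in $G_1$, which supplies $y^{F_0} = y$, collapses the two identities to $u y = y u$. This shows $G_1 \cap G_1^x$ consists exactly of those $y \in G_1$ commuting with $u$, i.e.\ $C_{G_1}(u)$; the reverse inclusion is immediate, since any $y \in C_{G_1}(u)$ satisfies both $y^{F_0} = y$ and $y^{F_0} = u^{-1} y u$ at once.

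For the second equality, the superscript $^{F_0}$ on $C_{\ov G}(u)$ should be read as the set of $F_0$-fixed points inside $C_{\ov G}(u)$, that is, $C_{\ov G}(u) \cap \ov G^{F_0}$; note that this reading is well-defined as a set even though $u^{F_0}$ need not equal $u$, so $F_0$ need not preserve $C_{\ov G}(u)$ as a subgroup. Then
\[
C_{G_1}(u) = C_{\ov G}(u) \cap \ov G^{F_0} \cap G = \bigl(C_{\ov G}(u)\bigr)^{F_0} \cap G,
\]
and the desired equality reduces to the containment $\ov G^{F_0} \subseteq G$. This in turn follows from $F_0^{r} = F$ (so $\ov G^{F_0} \subseteq \ov G^{F}$) together with the standard fact that $\ov G^{F_0}$ is generated by its unipotent elements and hence lies in the derived subgroup $(\ov G^F)' = G$.

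There is no serious obstacle here: the computation is short and essentially formal. The two points requiring attention are the conjugation convention $G_1^x = x^{-1} G_1 x$ (this is what makes the conjugating element come out as $u = x^{-1}x^{F_0}$ on the nose rather than a conjugate such as $xux^{-1}$), and the interpretation of the superscript $^{F_0}$ on $C_{\ov G}(u)$ as literal set-theoretic fixed points rather than as presupposing $F_0$-stability of $C_{\ov G}(u)$.
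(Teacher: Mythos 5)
Your proof of the first equality is exactly the paper's argument: the same expansion of $(xyx^{-1})^{F_0}$, the same collapse of the two conditions to $uy=yu$, and the same care with the convention $G_1^x=x^{-1}G_1x$. The paper's displayed chain of equivalences in fact stops at $C_{G_1}(x^{-1}x^{F_0})$ and leaves the second equality unproved, so your attempt to justify it goes beyond the source; your reading of the superscript $F_0$ as literal set-theoretic fixed points is also the right one (the paper itself remarks that $C_{\ov G}(x^{-1}x^{F_0})$ need not be $F_0$-stable).

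However, the last step of that justification is wrong. Since $\ov G$ is taken to be \emph{adjoint}, $\ov G^{F_0}$ need not be generated by its unipotent elements: for $\ov G=\PGL_2$ over $\ov{\bF}_3$ and $q_0=3$ one has $\ov G^{F_0}=\PGL_2(3)\cong S_4$, whose unipotent elements (of order $3$) generate only $A_4$. Accordingly the containment $\ov G^{F_0}\subseteq G=(\ov G^F)'$ can fail outright: $\PGL_2(3)\not\le\PSL_2(27)$, because $-1$ is not a square in $\bF_{27}^{*}$. What your computation actually proves is $C_{G_1}(u)=(C_{\ov G}(u))^{F_0}\cap G$, and that is the statement that should be carried forward (the paper is equally imprecise here, having defined $G_1=\ov G^{F_0}\cap G$ and then dropped the $\cap\, G$). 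The discrepancy is harmless for the application --- only the degree bound $\deg_{ad}C_{\ov G}(u)\le\deg_{ad}\ov G$ is used downstream, and intersecting with $G$ only shrinks the groups in the chain --- but as a proof of the equality literally as stated, the step ``$\ov G^{F_0}$ is generated by unipotents, hence lies in $(\ov G^F)'$'' does not stand.
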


Note that the group $C_{\ov G}(x^{-1}x^{F_0})$ may not be $F_0$-stable.

\begin{proof} We have 
\[
\begin{array}{ll}
g \in G_1\cap G_1^x & \Leftrightarrow g,g^{x^{-1}} \in G_1 \\
                               & \Leftrightarrow g^{F_0} = g \hbox{ and } (xgx^{-1})^{F_0} = xgx^{-1} \\
                               &  \Leftrightarrow g^{F_0} = g \hbox{ and } x^{F_0}gx^{-F_0}=xsx^{-1} \\
                               & \Leftrightarrow g \in C_{G_1}(x^{-1}x^{F_0}).
\end{array}
\]
\end{proof}

Recall that we have a stabilizer chain $ G>G_1>G_2 > \cdots > G_k=1$, where $G_i=G_{i-1}\cap G_1^{g_i}$ for each $i$, and $g_i \in G$. Define 
\[
\overline G_1 = \overline G,\;\;\overline G_2 = C_{\ov G}(g_2^{-1}g_2^{F_0}),
\]
and for $2 \le j \le k$,
\[
\overline G_j = \bigcap_{i=2}^j C_{\ov G}(g_i^{-1}g_i^{F_0}).
\]
Then by Lemma \ref{q0int}, we have $G_j = \overline G_j^{F_0}$ for $1\le j \le k$, and so 
\[
\overline G = \overline G_1 > \overline G_2 > \cdots > \overline G_k.
\]
Given $x\in \overline G$, we of course have $C_{\ov G}(x) = \{g \in \overline G : gx = xg\}$, so this centralizer consists of solutions of a system of linear equations in the entries of $g$, and hence $\deg_{ad}C_{\ov G}(x) \le \deg_{ad}\ov G$.
 Now we can bound the length $k$ of the chain exactly as in Case 1, and the proof is complete.

\subsection{Case~3 of Theorem \ref{t: alg}}
This case is a triviality: clearly if $|G_1|\leq 2^{d^2}$, then a stabilizer chain has length at most $d^2$. This observation completes the proof of Theorem~\ref{t: i}. $\;\;\;\Box$

\vspace{2mm}

\begin{example}\label{complem} {\rm Here is an example that shows there is no general complementary 
 \emph{lower bound} to go with the upper bound given in Theorem~\ref{t: i}. Let $G=\SL_r(2)$ acting on $\Omega$, the set of cosets of $H$ where $H$ is the normalizer of a Singer cycle, with $r$ an odd prime. Then $H\cong C_{2^r-1}\rtimes C_r$ and $H$ is maximal in $G$ for $r\ge 13$ (see \cite[Table 3.5A]{KL}). Since distinct conjugates of the Singer cycle $C_{2^r-1}$  intersect trivially, it follows that for this action we have $\Irred(G,\Omega)\leq 3$. In particular, $\Irred(G,\Omega)$ does not necessarily grow as the rank increases, even when $G$ is simple and the action is primitive.}
\end{example}

\begin{remark}\label{rem}
{\rm It is possible to improve the polynomial bound of Theorem~\ref{t: i} in particular cases. For example, consider parabolic actions of $G =\PSL_{n}(q)$ -- i.e. transitive actions for  which the stabilizer $G_1$ is a parabolic subgroup. Set $\ov G=\SL_{n}(\ov \Fq)$ and let $\rho$ be the usual $n$-dimensional rational representation. In this situation, parabolic subgroups $\ov{G_1}$ satisfy $\deg_\rho(\ov{G_1})\leq n$ and so Lemma \ref{leng} gives  $\Irred(G,\Omega)\leq n^4\log_2n$.}
\end{remark}

\section{Almost simple groups: proof of Corollary \ref{c: ii}} \label{corpf}

Let $G$ be an almost simple group, with socle $S = G(q)$, a simple group of Lie type of rank $r$ over $\Fq$, where $q=p^f$ ($p$ prime). Let $G$ acts primitively on a set $\Omega$, with point-stabilizer $G_1$, and let $M_1=G_1\cap S$. Note that  $G=G_1S$, and so $G_1/M_1 \cong G/S$, a subgroup of ${\rm Out}(S)$.

Now let $G>G_1>G_2>\cdots > G_k=\{1\}$ be a stabilizer chain, where $G_i = G_{\a_1\cdots \a_i}$ for $1\le i\le k$.
Define $M_i=G_i\cap S$. We obtain two chains,
\begin{align*}
 S> M_1\geq M_2\geq &\cdots \geq M_k=\{1\}; \\
 G/S = G_1/M_1 \geq G_2/M_2 \geq & \cdots \geq G_k/M_k=\{1\}.
\end{align*}
Observe that, for $i=1,\dots, k-1$, if $M_i=M_{i+1}$, then $G_i/M_i > G_{i+1}/M_{i+1}$. We know that a proper subgroup chain in $G/S$ has length at most $\log_2(|{\rm Out}(S)|) \le  \log_2(6r)+\pi(f)$. Now define
\[
 I=\{i \mid 1\leq i \leq k-1 \textrm{ and } M_i > M_{i+1}\}
\]
and write $I=\{i_1,\dots, i_{\ell-1}\}$ where $i_j < i_{j+1}$ for $j=1,\dots, \ell-2$. Setting $i_\ell=k$ we have, firstly, that
\begin{equation}
\ell \geq k - \log_2(6r)-\pi(f)\label{comb}
\end{equation}
and, secondly, that
\begin{equation}
 S>M_{i_1}>M_{i_2} > \cdots > M_{i_\ell}=\{1\}.\label{ch}
\end{equation}
Note that $i_1=1$, and \eqref{ch} is the stabilizer chain $S>S_{\a_1} > S_{\a_1\a_{i_2}} > \cdots $
for the action of $S$ on $\Omega$.

Now Theorem \ref{t: alg} tells us that $S_{\a_1}$ satisfies (1), (2) or (3) of the conclusion of that theorem. Hence, arguing 
exactly as in the proof of Theorem~\ref{t: i} we obtain that $\ell\leq 174r^8$. Combining this bound with \eqref{comb} yields $k\leq 174r^8+\log_2(6r) + \pi(f)$, which is less than $177r^8 + \pi(f)$. This completes the proof of Corollary \ref{c: ii}.  $\;\;\Box$

\begin{example}\label{pif} {\rm Here is an example that shows that the term $\pi(f)$ in the upper bound in Corollary \ref{c: ii} cannot be avoided. 

Let $G=\PGammaL_2(q)$ with $q=p^f$, and consider the action of $G$ on the set of $1$-subspaces of $V=(\mathbb{F}_q)^2$. We claim that $\Irred(G,\Omega)=3+\pi(f)$. To see this, write the prime factorization of $f$ as $f=r_1r_2\cdots r_{\ell}$ where $\ell=\pi(f)$, write $\{e_1, e_2\}$ for the natural basis of $V$ over $\Fq$, and consider the stabilizer chain obtained by successively stabilizing the following $1$-spaces (in order):
\[
 \langle e_1\rangle, \,
 \langle e_2\rangle, \,
 \langle e_1+e_2\rangle, \,
 \langle e_1+\zeta_1 e_2\rangle, \, \langle e_1+\zeta_2e_2\rangle, \dots ,
 \langle e_1+\zeta_\ell e_2 \rangle,  
\]
where, for $i=1,2,\dots, \ell$, $\zeta_i$ is a primitive element of $\mathbb{F}_{p^{r_1r_2\cdots r_i}}$. This stabilizer chain establishes that $\Irred(G,\Omega)\geq 3+\pi(f)$; on the other hand the 3-transitivity of the action of $G$ implies that the stabilizer of any 3 distinct points is isomorphic to $C_f$ and this implies that $\Irred(G,\Omega)\leq 3+\pi(f)$. }
\end{example}

It seems possible, however, that one could do better for $\Base(G,\Omega)$ and/or $\Height(G,\Omega)$. In the proof of Lemma~\ref{l: bb} below we shall show that there exists a primitive action of $G=\PGammaL_2(q)$ for which $\Base(G,\Omega)\geq\pi_d(f)$, where $\pi_d(f)$ is the number of \emph{distinct} primes dividing the integer $f$.

\begin{conj}
There exists a function $g:\mathbb{Z}^+\to \mathbb{Z}^+$ such that if $G$ is an almost simple group of Lie type of rank $r$ over a field of order $p^f$ acting primitively on a set $\Omega$, then 
\[
 \Base(G,\Omega) \leq \Height(G,\Omega) < g(r)+\pi_d(f) \textrm{ and } 
\]
where $\pi_d(f)$ is the number of distinct primes dividing the integer $f$.
\end{conj}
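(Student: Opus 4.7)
The plan is to refine the proof of Corollary \ref{c: ii}, replacing the crude chain-length estimate in $G/S$ by an argument that exploits independence rather than mere strict containment. Let $\Lambda=[\omega_1,\dots,\omega_\ell]$ be an independent sequence for $G$ on $\Omega$, and let $S$ be the socle. For each $k\in[\ell]$ pick a witness $g_k\in G_{(\Lambda\setminus\omega_k)}\setminus G_{(\Lambda)}$, and partition $[\ell]=I\sqcup J$ according to whether $g_k\in S$ is possible or not. It will suffice to bound $|I|$ by a polynomial $g_0(r)$ and $|J|$ by $\pi_d(f)+O(\log r)$, for then $g(r):=g_0(r)+O(\log r)$ does the job.

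For $|I|$, the subsequence $(\omega_k:k\in I)$ carries an independence-type property for the action of $S$: each $g_k\in S$ fixes $\omega_i$ for $i\in I\setminus\{k\}$ and moves $\omega_k$. A Height-analogue of Theorem \ref{t: i} -- essentially contained in its proof, since $\Height\le \Irred$ and $S$ is simple of Lie type -- then yields $|I|\le 174r^8$. Some care is required because points indexed by $J$ need not be fixed by $g_k$; one natural fix is to replace each $\omega_k$ by the orbit of $\omega_k$ under the pointwise stabiliser in $G$ of the remaining points, so that the $S$-witnesses act on a genuine $S$-orbit and a Height-bound for the socle action becomes applicable.

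For $|J|$, each image $\bar g_k$ in $\mathrm{Out}(S)$ is non-trivial. Decompose $\mathrm{Out}(S)$ as an extension involving diagonal, field, and graph automorphisms. The graph and diagonal parts have order bounded by a function of $r$, contributing at most $O(\log r)$ to $|J|$. The field-automorphism part lies in a cyclic group $C_f$, and its fixed subfield has the form $\bF_{p^{m_k}}$ for some divisor $m_k\mid f$. After replacing each $g_k$ by a suitable power that isolates its field-automorphism component, independence translates into the combinatorial condition that
\[
m_k\nmid \mathrm{lcm}(m_i:i\in J\setminus\{k\}) \quad \text{for every }k\in J.
\]
Comparing $p$-adic valuations, each such $k$ requires a prime $p_k\mid f$ with $v_{p_k}(m_k)>v_{p_k}(m_i)$ for all $i\neq k$; a pigeonhole argument then forces the $p_k$'s to be pairwise distinct, so $|J|\le \pi_d(f)+O(\log r)$.

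The hard part, I expect, is making this partition rigorous. A priori a single witness $g_k$ may mix a socle element with an outer element whose fixed-field structure is not cleanly readable off the divisor lattice, so some Frobenius-type canonicalisation of the witnesses, or an induction along the filtration $\mathrm{Inn}(S)\triangleleft \mathrm{Inndiag}(S)\triangleleft G$, seems necessary to extract the correct $m_k$. Additional attention is needed for the triality automorphism in $D_4$, where graph and field automorphisms do not commute cleanly, and for the socle--outer interaction in the bound for $|I|$. The combinatorial divisor-lattice lemma is clean once the witnesses have been placed in the appropriate cosets; everything upstream of that lemma is where the real technical work of the conjecture lies.
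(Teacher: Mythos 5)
First, a point of framing: the statement you are proving is stated in the paper as an open \emph{conjecture}. The paper offers no proof of it; the authors only prove the weaker bound $\Irred(G,\Omega)\le 177r^8+\pi(f)$ (Corollary \ref{c: ii}, with $\pi(f)$ counting prime factors \emph{with} multiplicity) and, in the proof of Lemma \ref{l: bb}, a lower-bound example showing $\Base(G,\Omega)\ge \pi_d(f)$ can occur. Example \ref{pif} shows that for $\Irred$ the multiplicity really is needed, so any proof of the conjecture must exploit the difference between subgroup \emph{chains} (which control $\Irred$) and \emph{independent families} of subgroups (which control $\Height$). Your strategy correctly identifies this as the crux, and your divisor-lattice lemma is right: if subgroups $H_1,\dots,H_m$ of a cyclic group $C_f$ satisfy $\bigcap_{i\ne k}H_i\supsetneq\bigcap_i H_i$ for every $k$, then comparing $p$-adic valuations of the orders shows each $k$ owns a distinct prime divisor of $f$, whence $m\le\pi_d(f)$. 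Your bound $|I|\le 174r^8$ is also essentially fine: the subsequence $(\omega_k:k\in I)$ \emph{is} independent for the action of $S$ on $\Omega$ (your witness $g_k\in S$ fixes every point of $\Lambda\setminus\omega_k$, including those indexed by $J$, so the ``orbit replacement'' fix is unnecessary), and although $S$ need not act primitively on $\Omega$, the stabilizer-chain argument in the proof of Corollary \ref{c: ii} gives $\Height(S,\Omega)\le\Irred(S,\Omega)\le 174r^8$ because Theorem \ref{t: alg} applies to $M\cap G(q)$ for $M$ maximal in the almost simple group.

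The genuine gap is the reduction of the bound on $|J|$ to the combinatorial lemma, and you have flagged it yourself; but it is worth seeing precisely why it is not a technicality. Write $A=G_{(\Lambda)}$ and $B_k=G_{(\Lambda\setminus\omega_k)}$. For $k\in J$ one has $B_k\supsetneq A$, $B_k\cap S=A\cap S$, and $B_k\cap B_{k'}=A$ for $k\ne k'$, and indeed $B_kS/S\supsetneq AS/S$ in $G/S\le\mathrm{Out}(S)$. But projection to $G/S$ does not commute with intersection of pointwise stabilizers: one only gets $(B_kS/S)\cap(B_{k'}S/S)\supseteq AS/S$, possibly strictly, and more generally $\bigcap_{i\ne k}(B_iS/S)$ may fail to contain $B_kS/S$-independent information. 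So the family $\{B_kS/S\}$ need not be an independent family of subgroups of $\mathrm{Out}(S)$, which is exactly the hypothesis your lemma needs. Relatedly, a single witness $g_k$ has no canonical ``field component'': its image in $\mathrm{Out}(S)$ is only an element of a coset of the inner-diagonal (and possibly graph) part, the fixed-field invariant $m_k$ depends on the choice of witness within $B_k\setminus A$, and the proposed ``suitable power that isolates the field component'' may land back in $A$. Your proposal therefore reduces the conjecture to its open core -- establishing an independence statement in $\mathrm{Out}(S)$ from independence in $G$ -- rather than resolving it. As an outline of a plausible attack it is reasonable and its combinatorial endgame is correct, but it is not a proof, and the missing step is not routine bookkeeping: it is the reason the statement is a conjecture.
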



\section{Theorem~\ref{t: i} and the Cameron--Kantor conjecture}\label{s: irred ck}

The Cameron--Kantor Conjecture (now a theorem due to Liebeck and Shalev \cite{lieshal2}) asserts the following:
\begin{quotation}
{\it There exists a constant $c>0$ such that if $G$ is an almost simple primitive non-standard permutation group on a set $\Omega$, then $\base(G,\Omega)\leq c$.} 
\end{quotation}
(A \emph{standard action} of an almost simple group $G$ with socle $S$ is a transitive action where either $S=A_n$ and the action is on subsets or uniform partitions of $\{1,\dots, n\}$, or $G$ is classical and the action is a subspace action; see \cite{burness} for more detail.) This statement is now known to be true with $c=7$ , by\cite{burness, burgs, burls, burness, burow}. 

Colva Roney-Dougal asked us whether a statement like the Cameron--Kantor conjecture might be true for any of the statistics $\Base(G,\Omega)$, $\Height(G,\Omega)$ or $\Irred(G,\Omega)$ and Theorem~\ref{t: i} was our answer to this question. One naturally wonders, though, whether it is possible to do better -- to investigate this, given \eqref{eq: in1}, the first question one should ask is whether a stronger statement can be proved for $\Base(G,\Omega)$ (since any such statement for $\Height(G,\Omega)$ or $\Irred(G,\Omega)$ is necessarily true for $\Base(G,\Omega)$). To investigate this we need to clarify some things.

\vspace{2mm}
{\bf Primitivity and transitivity.} Suppose that $G$ is a transitive permutation group on $\Omega$ and identify $\Omega$ with $(G:H)$ where $H$ is the stabilizer of a point. Now let $F\leq H$ and let $\Gamma = (G:F)$. Then it is true that $\base(G,\Gamma)\leq \base(G,\Omega)$ hence, in particular, the Cameron--Kantor Conjecture gives information about all transitive almost simple permutation groups $G$ for which a point-stabilizer is a subgroup of a maximal subgroup that is a point stabilizer for a non--standard primitive action.

Things are more complicated for us because it is not necessarily true that $\Base(G,\Gamma)\leq \Base(G,\Omega)$, that $\Height(G,\Gamma)\leq \Height(G,\Omega)$ or that $\Irred(G,\Gamma)\leq \Irred(G,\Omega)$; the examples below demonstrate this. Hence in investigating how to extend the statement of the Cameron--Kantor Conjecture we need to distinguish between statements involving primitive groups and those involving transitive groups.

\vspace{2mm}
{\bf Rank-dependent constant versus absolute constant}. Our investigations will focus on almost simple groups with socle a group of Lie type. Our first example will establish that it is not possible to  give an absolute upper bound for $\Base(G,\Omega)$, even for non-standard actions. In light of this it is worth clarifying what the Cameron--Kantor Conjecture implies with regard to a rank-dependent upper bound. It is the following:
\begin{quotation}
{\it For every positive integer $r$ there exists a constant $c>0$ such that if $G$ is an almost simple primitive permutation group on a set $\Omega$, with socle a group of Lie type of rank at most $r$, then $\base(G,\Omega)\leq c$.} 
\end{quotation}
The point we are making here is that, if we allow our upper bound to be rank-dependent, then we do not need to distinguish between standard and non-standard actions -- it is easy enough to establish that the standard actions also satisfy the given statement. (For the $\mathcal{C}_8$ standard actions of $\Sp_{2m}(q)$ this follows from \cite[Lemma~6.11]{glodas}; for the $\mathcal{C}_1$ standard actions of the classical groups this follows from \cite[Theorem~3.1]{kelsey2021relational}.)

Note, finally, that we have not considered the question of Cameron--Kantor like statements for irredundant bases of primitive actions of the alternating groups.

\subsection{Simple, primitive, absolute upper bound}

In this subsection we show that the following possible extension of the Cameron--Kantor Conjecture is false:
\begin{quotation}
{\it There exists a constant $c>0$ such that if $G$ is a simple primitive non-standard permutation group on a set $\Omega$, then $\Base(G,\Omega)\leq c$.}
\end{quotation}
The key point here is that an upper bound on $\Base(G,\Omega)$ in this setting must depend on $r$.

\begin{lem}
For every $n\ge 13$, $q\ge 5$, there exists a non-standard primitive action $(\PSL_n(q),\Omega)$ such that 
$\Base(\PSL_n(q),\Omega)\ge n-1$.
\end{lem}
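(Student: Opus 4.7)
The plan is to take $\Omega$ to be the set of \emph{frames} of $V = \Fq^n$ -- unordered direct sum decompositions $V = L_1 \oplus \cdots \oplus L_n$ into one-dimensional subspaces -- equivalently, the coset space of the monomial subgroup $H = N_G(T)$ for $T$ a split maximal torus in $G = \PSL_n(q)$. This $H$ is a maximal subgroup of type $\mathcal{C}_2$, and for $n \geq 5$ and $q \geq 5$ the resulting action is primitive and non-standard (see \cite{KL}). I will exhibit an explicit minimal base of size $n-1$.

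Fix a basis $e_1, \ldots, e_n$ of $V$ and, for $i = 1, \ldots, n-1$, let
\[
F_i = \bigl\{\langle e_1 + e_{i+1}\rangle,\ \langle e_2\rangle,\ \langle e_3\rangle,\ \ldots,\ \langle e_n\rangle\bigr\},
\]
which differs from the standard frame only by replacing $\langle e_1\rangle$ with $\langle e_1 + e_{i+1}\rangle$. The claim is that $(F_1, F_2, \ldots, F_{n-1})$ is a minimal base. To verify the base property, I lift to $\SL_n(q)$ and take $A$ stabilizing every $F_i$: since $\langle e_j\rangle$ with $j \geq 2$ lies in each $F_i$ while $\langle e_1 + e_{i+1}\rangle$ lies only in $F_i$, the matrix $A$ must permute the common lines via some bijection $\tau$ of $\{2,\ldots,n\}$ and fix each $\langle e_1 + e_{i+1}\rangle$ as a line. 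Writing $Ae_1 = \sum_j a_j e_j$ and $Ae_j = c_j e_{\tau(j)}$ for $j \geq 2$, and expanding $A(e_1 + e_{i+1}) = \mu (e_1 + e_{i+1})$ coefficient-by-coefficient over all $i$, one forces $a_1 = \mu$, $a_j = 0$ for $j \geq 2$, $\tau = \mathrm{id}$, and $c_j = \mu$; hence $A = \mu I$, which is trivial in $\PSL_n(q)$. For minimality, when $F_{i_0}$ is removed, the stabilizer of the remaining sequence in $\SL_n(q)$ contains the one-parameter family of diagonal matrices $\operatorname{diag}(\mu, \ldots, \mu, c, \mu, \ldots, \mu)$ with $c$ in position $i_0+1$ and $\mu^{n-1}c=1$; modulo $\mu_n(\Fq)$ this descends to a subgroup of $\PSL_n(q)$ of order $(q-1)/\gcd(n,q-1)$, which is at least $2$ whenever $(q-1) \nmid n$.

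The main obstacle is the coefficient analysis in the base step, specifically the case distinction on the permutation $\tau$: whenever $\tau \neq \mathrm{id}$, one must derive a contradiction by choosing an index $i$ with $\tau(i+1) \neq i+1$ (which forces some $a_{i+1} = \mu$) together with a second index $i'$ whose coefficient constraint forces the same $a_{i+1} = 0$; this works because the $n-1 \geq 12$ distinct frames massively over-constrain any non-trivial $\tau$. The arithmetic edge case $(q-1) \mid n$, which collapses the displayed stabilizer to the identity in $\PSL_n(q)$, is handled separately -- for instance by using for those $(n,q)$ a $\mathcal{C}_3$ field-extension maximal subgroup in place of $H$, or by augmenting the construction with sign-twisted frames of the form $\{\langle e_1 + e_n\rangle, \langle e_1 - e_n\rangle, \langle e_2\rangle, \ldots, \langle e_{n-1}\rangle\}$ (in odd characteristic), whose stabilizer carries an additional $\pm$-symmetry not absorbed by the centre, so that a minimal base of size $n-1$ can still be produced in these remaining cases.
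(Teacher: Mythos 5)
Your choice of action is the same as the paper's (the $\mathcal{C}_2$ action on frames, i.e.\ on cosets of the normalizer of a split torus), and your verification of the base property is sound. The gap is in minimality, and it is caused by your choice of frames: because every $F_i$ perturbs the \emph{same} line $\langle e_1\rangle$, the pointwise stabilizer in $\SL_n(q)$ of $\Lambda\setminus\{F_{i_0}\}$ is not merely \emph{contains} but is \emph{exactly} the diagonal family $\operatorname{diag}(\mu,\dots,\mu,c,\mu,\dots,\mu)$ with $\mu^{n-1}c=1$ (run your own coefficient analysis with the $n-2$ remaining frames: it still forces $a_k=0$ for $k\ge 2$ and $\tau=\mathrm{id}$, leaving only $c_{i_0+1}$ free). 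Its image in $\PSL_n(q)$ has order $(q-1)/\gcd(n,q-1)$, which equals $1$ precisely when $(q-1)\mid n$. For such pairs --- e.g.\ $(n,q)=(16,5)$ or $(14,8)$, both within the range of the lemma --- your $\Lambda$ is therefore a base but \emph{not} a minimal one: any $n-2$ of the $F_i$ already form a base. This is not a fixable slip in the write-up; the construction itself fails there. You flag the problem, but neither proposed repair is carried out: the $\mathcal{C}_3$ alternative is a bare assertion with no construction, and the sign-twisted frames are confined to odd characteristic (so $(14,8)$ is untouched) and are not verified even where they apply.

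The paper sidesteps this arithmetic degeneration by varying which vector is perturbed: its $i$-th decomposition replaces $\langle e_i\rangle$ by $\langle e_i+e_{i+1}\rangle$, so that after deleting the $j$-th one the only lines common to the remaining decompositions are $\langle e_j\rangle$ and $\langle e_n\rangle$, and the residual stabilizer contains an element interchanging these two lines --- a non-central element of $\PSL_n(q)$ for every $n$ and $q$. The moral is that the witnesses for minimality should be permutation-type elements rather than diagonal ones, since the latter can be swallowed by the centre. To complete your argument you must either adopt an asymmetric choice of frames along these lines, or give a genuine, verified construction for the cases $(q-1)\mid n$ in both odd and even characteristic.
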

\begin{proof}
We consider the action of $G=\SL_n(q)$ acting on the cosets of a $\mathcal{C}_2$-maximal subgroup that is the normalizer of a split torus. For $q\ge 5$, $n\ge 13$ this induces a primitive non-standard action of $\PSL_n(q)$ (see \cite[Table 3.5A]{KL}); furthermore this action of $G$ is equivalent to the action of $G$ on decompositions of $V=(\mathbb{F}_q)^n$ as a direct sum of $n$ $1$-dimensional subspaces.

Let $\{e_1,\dots, e_n\}$ be a basis of $V$ over $\Fq$. For $i=1,\dots, n-1$, we define a decomposition $\mathcal{D}_i$ of $V$ as follows:
\[
 \mathcal{D}_i = \langle e_1\rangle \oplus \langle e_2\rangle \oplus \cdots \oplus \langle e_{i-1}\rangle \oplus \langle e_i+e_{i+1}\rangle \oplus \langle e_{i+1}\rangle \oplus \langle e_{i+2}\rangle \oplus \cdots \oplus \langle e_n\rangle.
\]

Suppose, first, that $g\in G$ fixes $\mathcal{D}_1,\dots, \mathcal{D}_{n-1}$. This implies that $g$ fixes the space $\langle e_n\rangle$ (since it is the only $1$-space appearing in all $n-1$ decompositions; similarly, for $j=1,\dots, n-1$, $g$ fixes the space $\langle e_j\rangle$ (since it is the only $1$-space appearing in all $n-1$ decompositions except for $\mathcal{D}_j$). Thus, for $j=1,\dots, n$, there exists $\lambda_j\in\Fq$ such that $e_j^g=\lambda_je_j$. But now, for $j=1,\dots, n-1$, the space $\langle e_j+e_{j+1}\rangle$ occurs in decomposition $\mathcal{D}_j$ and no others, hence this 1-space too is fixed by $g$. This implies, finally, that, for $j=1,\dots, n-1$, $\lambda_{j}=\lambda_{j+1}$ and so $g$ acts as a scalar. In particular, the set $\{\mathcal{D}_1,\dots, \mathcal{D}_{n-1}\}$ is a base for the induced action of $\PSL_n(q)$.

On the other hand, for $j\in 1,\dots, n-1$, define $\Lambda_j=\{\mathcal{D}_1,\dots, \mathcal{D}_{j-1}, \mathcal{D}_{j+1},\dots,\mathcal{D}_{n-1}\}$ and set $g_j$ to be an element of $G$ that swaps $\langle e_j\rangle$ and $\langle e_n\rangle$ while fixing $\langle e_i\rangle$ for $i=1,\dots, j-1, j+1,\dots, n-1$. It is straightforward to check that $g$ fixes all of the decompositions in $\Lambda_j$. We conclude that $\Lambda$ is a minimal base for this action of size $n-1$.
 \end{proof}

In light of this lemma our remaining investigations will focus on almost simple groups where the socle is a group of Lie type of bounded rank.

\subsection{Simple, transitive, rank-dependent upper bound}\label{s: stb}

In this subsection we show that the following possible extension of the Cameron--Kantor Conjecture is false:
\begin{quotation}
{\it For every positive integer $r$ there exists a constant $c>0$ such that if $G$ is a simple transitive permutation group on a set $\Omega$, with socle a group of Lie type of rank at most $r$, then $\Base(G,\Omega)\leq c$.}
\end{quotation}

The next lemma does the job:

\begin{lem}
 For every integer $c>1$, there exists a transitive action $(\SL_2(2^c),\Omega)$, such that $\Base(\SL_2(2^c),\Omega) \ge c$.
\end{lem}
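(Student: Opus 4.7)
The plan is to exhibit an explicit transitive coset action of $G := \SL_2(2^c)$ on $\Omega = G/V$ for a carefully chosen non-maximal subgroup $V$ of index $2$ in a Sylow $2$-subgroup $U$ of $G$, and then write down a minimal base of size $c$. Identify $U$ with $(\Fq,+)$ via $u(a) = \bigl(\begin{smallmatrix} 1 & a \\ 0 & 1 \end{smallmatrix}\bigr)$, and let $V = \{u(a) : a \in W\}$ for a chosen $\bF_2$-hyperplane $W \subset \Fq$. The torus $T$ of a Borel subgroup $B = UT$ normalizes $U$ and acts on it through the multiplier map $\mu : T \to \Fq^*$, $\mathrm{diag}(\alpha, \alpha^{-1}) \mapsto \alpha^2$, which is a bijection because $\gcd(2, q-1) = 1$; consequently the $T$-conjugates of $V$ realize every hyperplane $\mu W \subset \Fq$ as $\mu$ runs over $\Fq^*$.

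Now fix an $\bF_2$-basis $\{1, \beta_1, \ldots, \beta_{c-1}\}$ of $\Fq$. Choose $x_i \in T$ so that $\mu(x_i) = \beta_i^{-1}$ for $1 \le i \le c-1$, set $x_0 = 1$, and let $\omega_i = x_i V \in \Omega$. Since $V \le U$ and $x_i \in N_G(U)$, the stabilizer $G_{\omega_i} = x_i V x_i^{-1}$ lies in $U$ and corresponds to the hyperplane $\beta_i^{-1} W$. Using the nondegenerate trace form $(x, y) \mapsto \mathrm{Tr}_{\Fq/\bF_2}(xy)$ to identify $\bF_2$-linear forms on $\Fq$ with elements of $\Fq$, the hyperplane $\beta_i^{-1} W$ is the kernel of the form represented by $\beta_i y_0$, where $W = \ker \phi_{y_0}$. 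Because $\{\beta_i y_0 : 0 \le i \le c-1\}$ is an $\bF_2$-basis of $\Fq$, these $c$ forms span the dual, and hence $\bigcap_{i=0}^{c-1} G_{\omega_i}$ is trivial; so $[\omega_0, \ldots, \omega_{c-1}]$ is a base.

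Minimality is then an immediate dimension count: for each $i$, the $c-1$ remaining forms $\{\beta_j y_0 : j \ne i\}$ remain linearly independent but span only a codimension-one subspace of the dual, so their common kernel in $\Fq$ is one-dimensional and nonzero, making the pointwise stabilizer of $[\omega_j : j \ne i]$ a nontrivial subgroup of $U$. Thus $[\omega_0, \ldots, \omega_{c-1}]$ is a minimal base of length $c$ for the transitive action of $G$ on $\Omega$, proving $\Base(\SL_2(2^c), \Omega) \ge c$. The only routine verifications beyond the linear algebra above are that the cosets $x_i V$ really are pairwise distinct (which follows from injectivity of $\mu$ and from $T \cap V \le T \cap U = 1$) and that every stabilizer intersection remains inside the abelian group $U$ (which uses $V \le U$ and $T \subseteq N_G(U)$).
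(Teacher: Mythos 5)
Your construction is correct and is essentially the paper's: the same action of $\SL_2(2^c)$ on the cosets of an index-$2$ subgroup of a Sylow $2$-subgroup $U$, with a minimal base of size $c$ given by $c$ torus-conjugates of that subgroup whose corresponding $\bF_2$-hyperplanes of $U \cong \bF_2^c$ are in general position. The paper packages this by restricting to the Borel subgroup and identifying the cosets with affine hyperplanes (using coordinate hyperplanes rather than the trace form), but the underlying linear algebra is identical.
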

\begin{proof}
Let $q=2^c$, let $G=\SL_2(q)$, let $U$ be a Sylow $2$-subgroup of $G$, let $H$ be an index $2$ subgroup of $U$ and let $\Omega$ be the set of right cosets of $H$ in $G$. Since $H=2^{c-1}$ it is clear that $\Base(G,\Omega)\leq \Irred(G,\Omega)\leq c$. We claim that, in fact, $\Base(G,\Omega)=c$.

To show this, let $B=N_G(U)$ and let $\Delta$ be the set of right cosets of $H$ in $B$. Since $\Base(B,\Delta)\leq \Base(G,\Omega)$ it is sufficient to show that $\Base(B,\Delta)\geq c$.

Consider $U$ as a $c$-dimensional vector space over $\mathbb{F}_2$. The action o $B$ on $\Delta$ is isomorphic to the action of $B$ on the set of all \emph{affine hyperplanes} -- these are the usual linear hyperplanes as well as their translates. Since we are working over $\mathbb{F}_2$, each hyperplane has 2 cosets (itself and one other) thus $|\Delta|=2(q-1)$.

Observe that if $H_1$ is a linear hyperplane, then the stabilizer of $H_1$ in $B$ is $H_1$ itself (in particular, $H_1$ is a conjugate of $H$). Let $e_1,\dots, e_c$ be the usual vectors in the natural basis of $U$ (so $e_i$ has $0$'s in all places except the $i$-th where the entry is $1$). For $i=1,\dots, c$, define
\[
 H_i:=\langle e_1,\dots, e_{i-1}, e_{i+1},\dots, e_c\rangle.
\]
Then $H_1,\dots, H_c$ are linear hyperplanes in $U$ hence are elements of $\Delta$ and conjugates of $H$. For $i=j,\dots, c$, define $\Lambda_j =\{H_1,\dots, H_{j-1}, H_{j+1},\dots, H_c\}$ and observe that $B_{(\Lambda_j)}=\langle e_j\rangle$. Thus $\Lambda=\{H_1,\dots, H_c\}$ is a minimal base of size $c$.
\end{proof}

\subsection{Almost simple, primitive, rank-dependent upper bound}

In this subsection we show that the following possible extension of the Cameron--Kantor Conjecture is false:
\begin{quotation}
{\it For every positive integer $r$ there exists a constant $c>0$ such that if $G$ is an almost simple primitive permutation group on a set $\Omega$, with socle a group of Lie type of rank at most $r$, then $\Base(G,\Omega)\leq c$.}
\end{quotation}

The next lemma does the job:

\begin{lem}\label{l: bb}
For every $c>0$, there exists a non-standard primitive action $(\PGammaL_2(q),\Omega)$, for some $q$, such that $\Base(\PGammaL_2(q),\Omega)>c$.
\end{lem}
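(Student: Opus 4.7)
The plan is to take $q=p^f$ where $f=r_1^{a_1}r_2^{a_2}\cdots r_\ell^{a_\ell}$ is a product of $\ell$ prime powers with distinct primes $r_1,\ldots,r_\ell$, so that $\pi_d(f)=\ell$ can be made arbitrarily large by choice of $f$. Setting $G=\PGammaL_2(q)$, I will choose a non-standard maximal subgroup $H$ of $G$ whose intersection with $\PSL_2(q)$ is not a subspace stabilizer -- a natural candidate is a subfield normalizer $H=N_G(\PGL_2(p^{f/r}))$ for some prime $r\mid f$, or the normalizer of a non-split torus $H=N_G(T_-)$ -- with the key feature that $H$ contains the full cyclic group $\langle\phi\rangle$ of field automorphisms. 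Once such an $H$ is in hand, the action of $G$ on $\Omega=G/H$ is primitive and non-standard, and we may look for a long minimal base.

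The construction of the minimal base is inspired by Example~\ref{pif}: for each $i\in\{1,\ldots,\ell\}$ I take $\omega_i=g_i\cdot H$, where $g_i\in G$ is built from an element $\alpha_i\in\mathbb{F}_q$ chosen to be primitive in the subfield $\mathbb{F}_{p^{r_i^{a_i}}}$. The idea is that the joint pointwise stabilizer $\bigcap_{i=1}^\ell g_iHg_i^{-1}$, restricted to $\langle\phi\rangle$, consists of those $\phi^k$ fixing every $\alpha_i$, i.e.\ those with $r_i^{a_i}\mid k$ for all $i$; this forces $f\mid k$ and hence triviality. Conversely, omitting $\omega_i$ leaves the nontrivial cyclic group $\langle\phi^{f/r_i^{a_i}}\rangle$ of order $r_i^{a_i}$ inside the residual stabilizer, because $\phi^{f/r_i^{a_i}}$ fixes every $\alpha_j$ with $j\ne i$ (since $r_j^{a_j}\mid f/r_i^{a_i}$). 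This will give the minimality of $\{\omega_1,\ldots,\omega_\ell\}$ as a base, provided the non-$\phi$ part of the stabilizer behaves as desired.

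The hard part will be arranging the $g_i$ so that the non-field-automorphism part of the stabilizer also collapses correctly -- that is, so that the only elements of $H\cap\bigcap_j g_jHg_j^{-1}$ surviving all $\ell$ conjugations lie in $\langle\phi\rangle$, while after omitting any single $\omega_i$ the surviving subgroup still contains the expected $\langle\phi^{f/r_i^{a_i}}\rangle$ and is not killed by the $\PGL_2(q)$-constraints. This requires choosing the $g_i$'s generically enough to break down the $\PGL_2(q)$-part of $H$ through successive conjugation, while still keeping the subfield-elements $\alpha_i$ visible to the field-automorphism analysis; together with verifying primitivity and maximality of $H$ for the given $f$, this case-checking is where the bulk of the technical work lies.
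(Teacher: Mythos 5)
Your strategy is the same one the paper uses (it is the idea behind Example~\ref{pif} pushed to a minimal base: index the base points by the distinct prime divisors of $f$ via primitive elements of the corresponding subfields), but the proposal stops exactly where the real proof begins, so there is a genuine gap. The two things the lemma requires are (a) that the pointwise stabilizer of \emph{all} $\ell$ points is trivial, and (b) that omitting any one point leaves a nontrivial stabilizer. Your field-automorphism analysis only controls the intersection of the stabilizer with $\langle\phi\rangle$; a general element of the stabilizer has the form $\phi^k x$ with $x\in\PGL_2(q)$ and $k$ arbitrary, and nothing in your sketch rules out such mixed elements. You acknowledge this (``provided the non-$\phi$ part of the stabilizer behaves as desired'') and defer it as ``case-checking,'' but it is not routine case-checking: it is the entire content of the proof. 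In the paper's version, one fixes $H$ to be the normalizer of a split torus, so that $\Omega$ is the set of decompositions $V=\langle e_1\rangle\oplus\langle e_1+\zeta_i e_2\rangle$ with $\zeta_i$ primitive in $\mathbb{F}_{p^{f_i}}$; writing a stabilizing element as $\sigma^r x$ and chasing the eigenvalue relations leads to the identity $\bigl((\zeta_j/\zeta_\ell-1)/(\zeta_j/\zeta_m-1)\bigr)^{p^r-1}=1$ for three distinct indices $j,\ell,m$, and one then has to prove that this quantity generates a field containing $\mathbb{F}_{p^{f_\ell f_m}}$, forcing $f_\ell f_m\mid r$ and eventually $g=x$ scalar. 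This needs at least three distinct prime factors of $f$ and a specific field-theoretic argument; neither appears in your outline.

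A secondary issue is that you have not committed to a subgroup $H$. The candidates you name behave differently: for a subfield-subgroup normalizer the conjugate-intersection pattern is governed by Lang--Steinberg-type centralizers (compare the Case~2 analysis in Section~\ref{s: proof irred}) rather than by fixed subspaces, and you would need a separate argument that distinct conjugates can be chosen so that the residual stabilizers after omitting one point are exactly the predicted cyclic groups and not larger or accidentally trivial. Until you fix $H$, write down the $g_i$ explicitly, and carry out the triviality computation for the full stabilizer, the proof is a plausible plan rather than a proof.
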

\begin{proof}
Let $G=\mathrm{\Gamma L}_2(q)$ and consider the action on cosets of the normalizer of a split torus.  For $q>11$ this induces a primitive non-standard action of $\PGammaL_2(q)$; furthermore this action of $G$ is equivalent to the action of $G$ on decompositions of $V=(\mathbb{F}_q)^2$ as a direct sum of two $1$-dimensional subspaces. Let $q=p^d$ and assume that $d=f_1\cdots f_k$ where $k\geq 3$ and $f_1,\dots, f_k$ are distinct primes.

Let $\{e_1, e_2\}$ be the natural basis for $V$ over $\Fq$: $e_1=(1\; 0)$ and $e_2=(0\;1)$. We define decompositions $\mathcal{D}_i$ for $i=1,\dots, k$ as follows:
\[
 \mathcal{D}_i: \left\langle e_1\right\rangle \oplus \left\langle e_1+\zeta_i e_2\right\rangle
\]
where $\zeta_i$ is a primitive element in $\mathbb{F}_{p^{f_i}}$. To see that 
$\mathcal{D}_1,\ldots ,\mathcal{D}_k$  form an independent set we consider the action $F=\langle \sigma\rangle<G$ where $\sigma$ is the field automorphism that acts on vectors by raising each entry to the $p$-th power.

For $j\in 1,\dots, k$, define $\Lambda_j=\{\mathcal{D}_1,\dots, \mathcal{D}_{j-1}, \mathcal{D}_{j+1},\dots,\mathcal{D}_{k}\}$. The pointwise-stabilizer of $\Lambda_j$ in $F$ is $\langle \sigma^{d/f_j}\rangle$ and so the pointwise-stabilizers of $\Lambda_j$ are distinct for $j=1,\dots, k$; in particular we obtain that $\Lambda=\{\mathcal{D}_1,\dots, \mathcal{D}_k\}$ is an independent set of size $k$.

We claim that, in fact, $\Lambda$ is a minimal base. To see this, we must prove that the pointwise-stabilizer of $\Lambda$ is trivial. Let $g\in G_{(\Lambda)}$ and write $g=\sigma^r x$ where $r$ is some positive integer and $x\in \GL_2(q)$; without loss of generality we can assume that $r$ divides $d$. It is clear that $\langle e_1\rangle^g=\langle e_1\rangle$ thus there exists $\lambda_0\in \Fq$ such that 
\[
 \lambda_0 e_1 = e_1^g = e_1^{\sigma^r x} = e_1^x.
\]

Similarly, for $i=1,\dots, k$, there exists $\lambda_i\in\Fq$ such that
\begin{align*}
 \lambda_i (e_1+\zeta_i e_2) &= (e_1+\zeta_i e_2)^g \\
 &= e_1^g + (\zeta_i e_2)^g \\
 &= e_1^x + \zeta_i^{\sigma^r} e_2^x \\
 &= \lambda_0 e_1 + \zeta_i^{p^r} e_2^x.
 \end{align*}
Rearranging we obtain that
\[
 e_2^x=\lambda_i \zeta_i^{1-p^r} e_2 + \zeta_i^{-p^r}(\lambda_i-\lambda_0)e_1.
\]
We conclude that, for distinct $i,j\in \{1,\dots, k\}$ we have
\[
 \lambda_i\zeta_i^{1-p^r} = \lambda_j\zeta_j^{1-p^r} \textrm{ and } \zeta_i^{-p^r}(\lambda_i-\lambda_0) = \zeta_j^{-p^r}(\lambda_j-\lambda_0).
\]
The latter equation yields that $\lambda_i=\left(\frac{\zeta_i}{\zeta_j}\right)^{p^r}\lambda_j+\left(1-\left(\frac{\zeta_i}{\zeta_j}\right)^{p^r}\right)\lambda_0$ while the former yields that $\lambda_i = \frac{\zeta_i^{p^r-1}}{\zeta_j^{p^r-1}}\lambda_j$. Combining these two identities and rearranging yields
\[
 \left(\frac{\zeta_j/\zeta_i - 1}{(\zeta_j/\zeta_i)^{p^r}-1}\right)\lambda_j=\lambda_0.
\]
If we fix $j$ and choose $\ell,m\in\{1,\dots, k\}$ such that $j$, $\ell$ and $m$ are all distinct, then we obtain that
\[
 \frac{\zeta_j/\zeta_\ell - 1}{(\zeta_j/\zeta_\ell)^{p^r}-1}=\frac{\zeta_j/\zeta_m - 1}{(\zeta_j/\zeta_m)^{p^r}-1}
\]
and, rearranging, we have
\[
 \left(\frac{\zeta_j/\zeta_\ell-1}{\zeta_j/\zeta_m-1}\right)^{p^r-1}=1.
\]
We claim that the smallest field containing the quantity in parenthesis is either $\mathbb{F}_{p^{f_jf_\ell f_m}}$ or $\mathbb{F}_{p^{f_\ell f_m}}$. To see this, denote this quantity $\eta$ and suppose that $\eta$ is contained in $\mathbb{F}_{p^{f_jf_\ell}}$. Rearranging we obtain
\[
 \zeta_m=\frac{\zeta_j\zeta_\ell\eta}{\zeta_j-\zeta_\ell+\zeta_\ell\eta} \in \mathbb{F}_{p^{f_jf_\ell}},
\]
a contradiction. A similar argument allows us to conclude that this quantity is not contained in $\mathbb{F}_{p^{f_jf_m}}$ and the claim follows.

We obtain that $r$ is divisible by both $f_\ell$ and $f_m$. Repeating this argument we obtain that $r$ is divisible by all primes $f_1,\dots, f_k$ and thus $g=x$. But this implies that $\lambda_i=\lambda_j=\lambda_0$ for all $i,j=1,\dots, k$ and $g$ is a scalar, as required.

We conclude that $\Lambda$ is a minimal base for this action. Since $|\Lambda|=k$, we need only choose $k>c$ to obtain that $\Base(G,\Omega)\geq k>c$ as required.
\end{proof}

\subsection{Simple, primitive, rank-dependent upper bound}

In light of the examples given in the preceding sections, this is the only setting where a direct extension of Cameron--Kantor is possible. As mentioned above, if we allow our upper bound to be rank-dependent, then we can ignore the distinction between standard and non-standard actions, hence the statement we end up with has the form of Theorem~\ref{t: i}.




\end{document}